\documentclass[preprint, 10pt,a4paper]{elsarticle}

\usepackage{amssymb,amsmath,epsfig,graphicx,amsthm, mathtools}
\usepackage{graphicx}
\usepackage{soul}
\usepackage{caption, subcaption}
\usepackage[colorlinks,allcolors=cyan!70!black,unicode]{hyperref}
\usepackage{booktabs}
\usepackage{makecell, pdflscape}
\usepackage{tabularx}
\usepackage{comment, xcolor}
\usepackage[dvipsnames]{xcolor}
\usepackage{trimclip,lipsum}
\usepackage[table]{xcolor}
\makeatletter
\def\ps@pprintTitle{%
 \let\@oddhead\@empty
 \let\@evenhead\@empty
 \def\@oddfoot{\small{ }}%
 \let\@evenfoot\@oddfoot}
\makeatother

\theoremstyle{plain}
 \newtheorem{thm}{Theorem}[section]

\theoremstyle{definition}

\theoremstyle{remark}
 
 \numberwithin{equation}{section}

\renewcommand{\leq}{\leqslant}
\renewcommand{\geq}{\geqslant}

 \DeclareMathOperator{\tr}{tr}
\begin{document}

%------Title and Author info-----------------
\title{A Model of Annual Tick Population Density in the Eastern United States as a Function of Questing Behavior and Host Availability}

%\date{February 28, 2021}

\author[uta-math]{Carli Peterson\corref{cor1}}
\ead{cxp4986@mavs.uta.edu}

\author[uta-math]{Erika Gallo}

\author[uta-math,uta-edu]{Christopher M. Kribs}

\cortext[cor1]{Corresponding author}
\address[uta-math]{Department of Mathematics, The University of Texas at Arlington, Arlington, TX 76019-0408, USA}
\address[uta-edu]
{Department of Teacher and Administrator Preparation, The University of Texas at Arlington, Arlington, TX 76019-0227, USA}

\vspace{18mm} \setcounter{page}{1} \thispagestyle{empty}

%---------Abstract--------------
\begin{abstract} %250 words or less
Understanding the ecological processes that regulate populations of the blacklegged tick, \textit{Ixodes scapularis}, is essential for explaining regional differences in tick abundance and the risk of tick-borne diseases. In this study, we develop a stage-structured nonlinear system of difference equations by ordering and composing the seasonal biological events that govern tick and host life cycles. The model incorporates ratio-dependent host-attachment, allowing host-finding success to depend jointly on host availability and tick abundance. Parameter estimates from both northeastern and southeastern ecosystems in the United States are used to examine how regional differences in host community composition and questing behavior influence tick population dynamics. Analysis shows that a $+1$/$-1$ bifurcation pair occurs at the demographic persistence threshold, producing an exchange of stability between the extinction and positive fixed points together with the emergence of an unstable cohort-based 2-cycle. Quantitative results show that ratio-dependent host-finding success limits population growth by restricting successful feeding, while questing behavior alters both the frequency and composition of host encounters. {Together, these mechanisms regulate tick persistence and long-term abundance, providing a demographic foundation for understanding variation in tick abundance and Lyme disease risk.}
\end{abstract}

\begin{keyword}
\textit{Ixodes scapularis} \sep difference equations \sep discrete-time model \sep host diversity \sep vector behavior
\end{keyword}

\maketitle

%---------Start of main document-------------------
\section{Introduction}

\textit{Ixodes scapularis}, the blacklegged tick, has become one of the most important arthropod vectors in North America because of its expanding geographic distribution and its role in transmitting \textit{Borrelia burgdorferi}, the causative agent of Lyme disease \citep{eisen2017tick, eisen2018blacklegged, kugelersurveillance2024}. Throughout its two-year life cycle, larvae, nymphs, and adults each require a blood meal before molting or reproducing, with immature ticks feeding primarily on small vertebrates and adults feeding on medium- and large-sized mammals \citep{fish1989host, keirans1996ixodes}.

Much of the tick life cycle is spent off-host, where individuals undergo periods of questing, molting, and overwintering before locating a host \citep{opalka2011two}. During questing, ticks detect hosts using cues such as carbon dioxide, body heat, and vibrations \citep{falco1991horizontal, oorebeek2009attracts, vassallo2002comparative}. Although these mechanisms aid in host detection, only a small fraction of questing ticks ultimately obtain a blood meal \citep{ginsberg2020local}. Host finding success therefore depends on both host abundance and competition among ticks for available hosts. As host density decreases relative to tick abundance, attachment success is expected to saturate due to limited host availability and increased host defenses such as grooming \citep{ogden2005dynamic, mount1997simulation, ginsberg2020local, levin1998density, vail1997density}.

Host-finding success is also influenced by geographic variation in questing behavior. Northeastern populations of \textit{I. scapularis} typically quest above the leaf litter, whereas southern populations spend most of their time beneath the litter layer \citep{ginsberg2017environmental, arsnoe2019nymphal}. These behavioral differences are thought to reflect regional variation in environmental conditions and host communities, with reptiles comprising a larger proportion of available hosts in the southeastern U.S. \citep{ginsberg2017environmental, ostfeld2000biodiversity}. Because reptilian hosts occupy the leaf litter and are poor reservoirs for \textit{B. burgdorferi}, differences in questing behavior have been proposed as an important contributor to regional variation in both tick abundance and Lyme disease incidence \citep{ginsberg2017environmental, arsnoe2019nymphal, ginsberg2021lyme}. Consequently, understanding how host availability and questing behavior influence feeding success is essential for predicting tick population dynamics.  
%This behavioral difference complicates surveillance efforts, as standard collection methods such as flagging and dragging are ineffective for collecting juvenile ticks in southern regions \citep{ginsberg2017environmental}. As a result, data on tick population densities and activity in the southeastern U.S. are limited.

Many mathematics models have been proposed to investigate the effects of climate, host abundance, and survival on tick populations \citep{gatewood2009climate, ogden2006climate, mount1997simulation}. However, existing models typically assume constant host-finding success or represent it solely as a function of host density, making it effectively constant. In addition, continuous-time systems are commonly proposed to capture tick population dynamics, which require external seasonal forcing or time delays to accurately capture the distinct life stages ticks transition through.

{In this work, we develop a discrete-time model using a sequencing approach that constructs the system by composing biologically distinct events. In this approach, we divide the annual life cycle into distinct subintervals corresponding to reproduction, mortality, host recruitment, questing, feeding, and stage transitions. The output of each event becomes the input to the next, and composing these event-specific updates over one complete year yields a system of difference equations describing the complete annual life cycle. This method preserves the ordering of biological processes while allowing the resulting model to be analytically tractable using standard tools for discrete dynamical system.}

Using this framework, we investigate how host availability, competition for hosts, and geographic variation in questing behavior influence the population dynamics of \textit{I. scapularis}. By comparing northern and southern ecological conditions, our model provides insight into the mechanisms regulating tick persistence and establishes a modeling framework for studying seasonally structured vector populations.

\section{Model Development}
\label{development}

In this section, we develop a stage-structured discrete-time model describing the annual life cycle of \textit{Ixodes scapularis} and its sylvatic hosts. A single model framework is used for both the northeastern and southeastern U.S., with regional differences represented through changes in parameter values, including questing behavior, body burden, and host density, as well as differences in host community composition, such as the greater diversity of reptilian hosts in the Southeast \citep{ostfeld2000biodiversity, graham2010overlooked}. This approach allows us to isolate how regional variation in host communities and questing behavior influences the long-term dynamics and stability of tick populations. For modeling purposes, hosts are categorized into six groups based on similarities in their life cycles, population densities, and patterns of tick utilization. Avian hosts were not included in the present study due to limited regional density estimates and their comparatively small contribution to overall tick demography \citep{giardina2000modeling, peterson2026host}. For the northeastern parameter set, the host groups are:
\begin{itemize}
    \item Group 1: White-footed mice
    \item Group 2: Chipmunks
    \item Group 3: Short-tailed shrews, \textit{Sorex} shrews, and red-backed voles
    \item Group 4: Squirrels
    \item Group 5: Five-lined skinks and eastern fence lizards
    \item Group 6: Deer, opossums, and raccoons
\end{itemize}
and for the southeastern set:
\begin{itemize}
    \item Group 1: White-footed mice and cotton mice
    \item Group 2: Chipmunks
    \item Group 3: Short-tailed shrews and \textit{Sorex} shrews
    \item Group 4: Squirrels
    \item Group 5: Five-lined skinks, eastern fence lizards, southeastern five-lined skinks, and ground skinks
    \item Group 6: Deer, opossums, and raccoons
\end{itemize}
Section~\ref{assumptions} outlines the simplifying assumptions used in our model, and Section~\ref{model} gives a detailed description of the model structure and formulation, in which biological processes are modeled as an ordered sequence of annual events with population-specific census times. 

\subsection{Modeling Assumptions}\label{assumptions}
To model the life cycle of \textit{I. scapularis}, we use the following set of assumptions:
\begin{itemize}
    \item Larvae feed in the summer, overwinter, and emerge as nymphs the following spring. The nymphs then feed, develop into adults, and females take their final blood meal in the fall \citep{eisen2025seasonal}.
    \item Larvae and nymphs feed on all host types, while adults feed only on members of host group 6 (medium- and large-sized mammals) \citep{CDC2022}.
    \item Host deaths are concentrated into two death events per year, one in the summer and one in the winter.
    \item Host groups 1-4 reproduce twice per year (once in summer and once in the fall), while host groups 5 and 6 reproduce once per year (in the summer).
    \item Contact rates are ratio-dependent, determined by the proportion of hosts to ticks in each life stage.
    \item Host recruitment is constant, while tick reproduction depends on both the number of adults and the adult-host ratio.
    \item Larval and nymphal questing behavior is varied from north to south, while adults' is held constant \citep{tietjen2020comparative, ginsberg2021lyme}.
\end{itemize}
%{Although some overlap between life stages likely occurs in nature, particularly in the southeastern U.S. where questing seasons are longer, we assume a sequential feeding pattern in which nymphs feed in the spring, larvae in the summer, and adults in the fall. This ordering reflects the dominant seasonal structure reported in field studies \citep{eisen2025seasonal} and avoids the need for a finer temporal scale and increased model complexity.}
\newpage
\begin{landscape}
{
\renewcommand{\arraystretch}{1.6}
\begin{table}[h!]
\centering
\begin{tabular}{l|l|ccccccc}
\toprule
 & &
$t+\frac{1}{7}$ &
$t+\frac{2}{7}$ &
$t+\frac{3}{7}$ &
$t+\frac{4}{7}$ &
$t+\frac{5}{7}$ &
$t+\frac{6}{7}$ &
$t+1$ \\
\midrule

Larvae & Event
& 
&
& Larvae hatch
& Larvae die
& \cellcolor{gray!25}\begin{tabular}{c}
Larvae feed\\
(Census)
\end{tabular}
& $L\to N$
& \\

  & Eqn
& $0$
& $0$
& $\frac{r\frac{A(t-\frac{1}{7})}{2}}{\frac{\frac{A(t-\frac{1}{7})}{2}}{H_6(t-\frac{1}{7})}+K}$
& $s_L S_L\left(t+\frac{3}{7}\right)L\left(t+\frac{3}{7}\right)$
& \cellcolor{gray!25}$L\left(t+\frac{4}{7}\right)$
& $0$
& $0$ \\

\midrule

Nymphs & Event
& Nymphs die
& \cellcolor{gray!25}\begin{tabular}{c}
Nymphs feed\\
(Census)
\end{tabular}
& $N\to A$
&
&
& $L\to N$
& \\

  & Eqn
& $s_NS_N(t)N(t)$
& \cellcolor{gray!25}$N\left(t+\frac{1}{7}\right)$
& $0$
& $0$
& $0$
& $L\left(t+\frac{5}{7}\right)$
& $N\left(t+\frac{6}{7}\right)$\\

\midrule

Adults & Event
&
&
& $N\to A$
&
& \cellcolor{gray!25}\begin{tabular}{c}
Adults die\\
(Census)
\end{tabular}
& \begin{tabular}{c}Female adults feed\\and lay eggs\end{tabular}
& \\

  & Eqn
& $0$
& $0$
& $N\left(t+\frac{2}{7}\right)$
& $A\left(t+\frac{3}{7}\right)$
& \cellcolor{gray!25}$s_A S_A\left(t+\frac{4}{7}\right)A\left(t+\frac{4}{7}\right)$
& $A\left(t+\frac{5}{7}\right)$
& $0$
\\

\midrule

Host $i$ & Event
& 
& \cellcolor{gray!25}(Census)
& Hosts born
& Hosts die
& 
& Hosts born
& Hosts die\\

  & Eqn
& $H_i(t)$
& \cellcolor{gray!25}$H_i\left(t+\frac{1}{7}\right)$
& $H_i\left(t+\frac{2}{7}\right)+\frac{\Lambda_i}{2}$
& $H_i\left(t+\frac{3}{7}\right)e^{-\frac{\mu_i}{2}}$
& $H_i\left(t+\frac{4}{7}\right)$
& $H_i\left(t+\frac{5}{7}\right)+\frac{\Lambda_i}{2}$
& $H_i\left(t+\frac{6}{7}\right)e^{-\frac{\mu_i}{2}}$
\\

\midrule

Host 5 & Event
& 
& \cellcolor{gray!25}(Census)
& Hosts born
& Hosts die
& 
&
& Hosts die\\

  & Eqn
& $H_5(t)$
& \cellcolor{gray!25}$H_5\left(t+\frac{1}{7}\right)$
& $H_5\left(t+\frac{2}{7}\right)+\Lambda_5$
& $H_5\left(t+\frac{3}{7}\right)e^{-\frac{\mu_5}{2}}$
& $H_5\left(t+\frac{4}{7}\right)$
& $H_5\left(t+\frac{5}{7}\right)$
& $H_5\left(t+\frac{6}{7}\right)e^{-\frac{\mu_5}{2}}$
\\

\midrule

Host 6 & Event
&
&
& Hosts born
& Hosts die
& \cellcolor{gray!25}(Census)
&
& Hosts die\\

  & Eqn
& $H_6(t)$
& $H_6\left(t+\frac{1}{7}\right)$
& $H_6\left(t+\frac{2}{7}\right)+\Lambda_6$
& $H_6\left(t+\frac{3}{7}\right)e^{-\frac{\mu_6}{2}}$
& \cellcolor{gray!25}$H_6\left(t+\frac{4}{7}\right)$
& $H_6\left(t+\frac{5}{7}\right)$
& $H_6\left(t+\frac{6}{7}\right)e^{-\frac{\mu_6}{2}}$
\\

\bottomrule
\end{tabular}

\caption{Visual representation of how the tick–host dynamics are divided into seven sequential events. The row depicting Host $i$ represents host groups 1--4. Gray cells denote census times where the state variables are defined.}
\label{timeline}
\end{table}
}

\end{landscape}
\subsection{{The Mathematical Model}}\label{model}

Incorporating the assumptions outlined in Section~\ref{assumptions} and the sequence of events described in Table~\ref{timeline}, we develop the following system of difference equations to model the life cycle of \textit{I. scapularis} ticks and their sylvatic hosts:
\begin{equation}\label{fullsystem}
\begin{aligned}
    L(t+1)&=B(t)s_LS_L(t),\\
    N(t+1)&=s_NS_N(t)L(t),\\
    A(t+1)&=s_AS_A(t)N(t+1),\\~\\
    H_{i}(t+1)&=\left(H_{i}(t)+\frac{\Lambda_i}{2}\left(1+e^{\frac{\mu_i}{2}}\right)\right)e^{-\mu_i}, \quad \text{for $i=1,2,3,4$},\\
    H_{5}(t+1)&=\left(H_{5}(t)+\Lambda_5\right)e^{-\mu_5},\\
    H_6\left(t+1\right)&=\left(H_6(t)e^{-\frac{\mu_6}{2}}+\Lambda_6\right)e^{-\frac{\mu_6}{2}},
\end{aligned}
\end{equation}
where 
\begin{align*}
B(t) & = \frac{rA(t)/2}{\frac{A(t)/2}{H_6(t)}+K},\\
S_L(t)&=\begin{cases}
    1 - e^{- \frac{z_L}{B(t)}\Sigma_L(t)}, & A(t)>0\\
    1, & A(t)=0\end{cases},\\
    S_N(t)&=\begin{cases}
    1 - e^{- \frac{z_N}{L(t)}\Sigma_N(t)}, & L(t)>0\\
    1, & L(t)=0\end{cases},\\
    S_A(t)&=\begin{cases}
    1 - e^{- \frac{z_A}{s_NS_N(t)L(t)}\Sigma_A(t)}, & L(t)>0\\
    1, & L(t)=0\end{cases},\\
    \Sigma_L(t)&=\left(\sum_{i=1}^5 b_{i,L} \left(H_i(t) + \frac{\Lambda_i}{k_i}\right) + b_{6,L} \left(H_6(t)e^{- \mu_6/2} + \Lambda_6\right)\right),
    \text{ for } k_i=
    \begin{cases}
        2, & 1\leq i \leq 4\\
        1, & i=5
    \end{cases},
    \\
    \Sigma_N(t)&=\left(\sum_{i=1}^5 b_{i,N}\sigma_{i,N} H_i(t) + b_{6,N}\sigma_{6,N} H_6(t)e^{- \mu_6/2}\right),\\
    \Sigma_A(t)&=b_{6,A}\sigma_{6,A} H_6(t),\\
    b_{i,j} &= \hat{b}_{i,j}c \quad \text{for } i = 1,\dots,4,6; \quad b_{5,j} = \hat{b}_{5,j}(1 - c)\quad \text{and} \quad  j\in\{L,N\}.
\end{align*}

The model parameters and state variables with their descriptions and units are listed in Table~\ref{param_table}. Model~\eqref{fullsystem} incorporates nine populations: larvae ($L$), nymphs ($N$), adults ($A$), and six host groups ($H_i$, $i \in \{1,2,3,4,5,6\}$).

Due to the 2-year life cycle of \textit{I. scapularis}, $N(t+1)$ depends on $L(t)$ while $A(t+1)$ depends on $N(t+1)$. We could equivalently express the adult equation as
\begin{align*}
    A(t+1) = s_As_NS_A(t)S_N(t)L(t),
\end{align*}
which is the form we will use in the following sections and analysis.
\begin{table}[!t]
% \rotatebox[origin=c]{90}{
\centering
\resizebox{\textwidth}{!}{%
\begin{tabular}{llr}
    \toprule
    \makecell[l]{\textbf{State}\\\textbf{Variable}} & \makecell[l]{\textbf{Definition}} & \makecell[r]{\textbf{Units}}\\
    \midrule
    $L$ & Larvae & larvae/ha\\
    $N$ & Nymphs & nymphs/ha\\
    $A$ & Adults & adults/ha\\
    $H_{1}$ & White-footed mice & mice/ha\\
    $H_{2}$ & Chipmunks & chipmunks/ha\\
    $H_{3}$ & Short-tailed shrews & shrews/ha\\
    $H_{4}$ & Squirrels & squirrels/ha\\
    $H_{5}$ & Five-lined skinks & lizards/ha\\
    $H_6$ & Deer & deer/ha\\
    \midrule
    \textbf{Parameter} & \textbf{Definition} & \textbf{Units} \\
    \midrule
    $\Lambda_{i}$  & Birth/recruitment of host group $i$ & host $i$/ha\\
    $\mu_{i}$ & Proportion of host type $i$ that die each year & dimensionless\\
    $s_j$ & Natural survival of ticks in life stage $j$ & dimensionless\\
    $z_j$ & Host-finding calibration constant & dimensionless\\
    $r$ & Adults tick fecundity& larvae/host 6\\
    \makecell[l]{$K$\\\vspace{0.1cm}} & \makecell[l]{Tick-host ratio at which tick reproduction is at\\half its maximum} & \makecell[r]{adults/host 6\\\vspace{0.1cm}}\\
    \makecell[l]{$\hat{b}_{i,j}$\\\vspace{0.1cm}} & \makecell[l]{Seasonal feeding preference of tick stage $j$ for\\host group $i$}& \makecell[r]{1/host $i$\\\vspace{0.1cm}}\\
    $\sigma_{i,j}$ & Host density scaling constant & dimensionless\\
    \makecell[l]{$c$\\\vspace{0.1cm}} & \makecell[l]{Proportion of questing season spent above the\\leaf litter} & \makecell[r]{dimensionless\\\vspace{0.1cm}}\\
    \bottomrule
    \end{tabular}
    }
    \caption{State variables and parameters for Model~\eqref{fullsystem} with their descriptions and units for $i \in \{1,2,3,4,5,6\}$ and $j \in \{L,N,A\}$. }
    \label{param_table}
\end{table}

\subsubsection{Model Construction}
To derive Model~\ref{fullsystem}, following the event-sequencing approaches of \cite{porco1999mathematical} and \cite{carrera2020cost}, we construct the annual dynamics by ordering and composing biologically distinct events. Each one-year time step is divided into seven sequential subintervals representing non-overlapping processes in the tick and host life cycles. The population state produced by each event serves as the initial state for the next event, so the individual event maps are effectively stacked over the annual cycle. Their composition yields a single map from the population state at time $t$ to the population state at time $t+1$ (see Table~\ref{timeline} and~\ref{ap:timeline} for the full timeline).

Our timeline begins in the spring, when nymphs quest, feed, and drop off their hosts. Larvae hatch from eggs laid by the previous year's adults and begin questing and feeding in the summer. Of the larvae that successfully fed, some molt into nymphs before overwintering, while others overwinter as larvae and molt the following spring \citep{lindsay1998survival}. In both cases, the resulting fed larvae or newly molted nymphs remain dormant until the following spring. Nymphs that feed in the spring emerge as adults in the fall, when females take their final blood meal. These females overwinter and lay eggs the following spring, beginning the cycle anew. 

Host dynamics change along with the tick cycle. For host groups 1--4, which typically breed multiple times per year, we include two recruitment events: one in the summer (just before larval feeding) and another in the fall (just before adult feeding). Host groups 5 and 6 generally produce a single clutch or litter annually, so they experience one recruitment event in the summer during their typical reproduction season. Mortality is modeled as two events per year to approximate continuous natural losses, with one occurring immediately after recruitment to capture high juvenile mortality.

To connect these biological events to the discrete-time model, we define census points that mark the start of each time step. Because there is no point in the year when all three tick life stages are simultaneously present, it is not possible to measure every population at a single common time. Instead, we define two census points to ensure each population is captured in our model. For nymphs and host groups 1--5, census occurs in the late spring after nymphs have fed. For larvae, adults, and host group 6, the census is taken during late summer, after larvae have fed but before adults have begun questing.

Table~\ref{timeline} summarizes this construction. Each column represents one of the seven sequential events within the year, while rows correspond to the different tick life stages and host groups. The ``Event'' rows describe the biological process occurring, while the ``Eqn'' rows show the mathematical update applied to the corresponding population after that event. Gray cells indicate census points where the state variables defining the model are measured.

Beginning from the census value at time $t$, each event modifies the population according to the process listed in the table, and the resulting quantity becomes the input for the next event. After all seven events are applied, the final expression defines the population at time $t+1$. In this way, the annual difference equations in Model~\ref{fullsystem} are constructed by composing the event-specific updates over the full yearly cycle. 

This event-based construction provides a transparent and clean way to translate the seasonally structured biological processes into a discrete-time model by decomposing the annual dynamics into subintervals, then recombining them into a single yearly update.

\subsubsection{Model Parameters}
 
Host recruitment ($\Lambda_i$, host $i$/ha) and mortality ($\mu_i$, dimensionless) are treated as constants and estimated from available literature or from analysis of our model. Tick population dynamics depend on two survival probabilities: $s_j$ (dimensionless) and $S_j(t)$ (dimensionless) for $j \in \{L,N,A\}$. The quantity $(1-S_j(t))$ represents mortality due to failure to find a host, which depends on host availability and tick density and is thus a function of time. To adjust host-finding success, we introduce calibration constants $z_j$ (dimensionless). At low tick densities, when individual ticks are more likely to find a host but the expected number of ticks per host is lower overall, $z_j$ adjusts the effective encounter rate between ticks and hosts. In contrast, $(1-s_j)$ represents all other tick mortality, which primarily arises from environmental stressors (desiccation, overheating, and freezing \citep{leal2020questing, eisen2016linkages}). Thus, $s_j$ reflects survival under these pressures. Tick recruitment is determined by adult fecundity ($r$, larvae/host) and host availability, with the saturation constant $K$ (adults/host) denoting the adult tick-to-host ratio at which larval reproduction is at half its maximum.

Seasonal tick feeding preferences are represented by $\hat{b}_{i,j}$ (tick/host), defined as the total number of ticks in life stage $j$ feeding on hosts in group $i$ divided by the number of available hosts in that group. This describes each host's feeding potential and determines how ticks distribute across species. These parameters are further scaled to reflect seasonal feeding potential by accounting for the length of a questing season and the length of stage-specific tick attachment (see Section~\ref{paramestimates} for details).

For host groups that include multiple species, we chose a representative and converted other hosts into this species using estimated tick preferences ($\hat{b}_{i,j}$). Group 1 is represented by white-footed mice, group 3 by short-tailed shrews, group 5 by five-lined skinks, and group 6 by white-tailed deer. Because larval, nymphal, and adult feeding preferences differ, we include $\sigma_{i,j}$, which are scaling parameters that adjust relative host densities of group $i$ across ticks in life stage $j$.

Finally, to capture differences in questing behavior, we introduce the parameter $c$, which represents the proportion of a tick's questing season that is spent above the leaf litter. Because adults show little variation in questing behavior from north to south \citep{tietjen2020comparative, ginsberg2021lyme}, this parameter is applied only to the larval and nymphal stages. Specifically, we multiply $c$ by the seasonal tick preferences $\hat{b}_{i,j}$ for $i \in \{1,2,3,4,6\}$ and $j \in\{L,N\}$, and $(1-c)$ by $\hat{b}_{5,j}$ for $j \in \{L,N\}$, yielding the new seasonal preference parameters $b_{i,j}$. Varying $c$ between 0 and 1 allows us to examine how questing behavior influences tick dynamics. When $c \approx 1$, ticks remain above the leaf litter, reducing encounters with lizards (host group 5) ($b_{5,L}$ and $b_{5,N}$ approach 0). Conversely, when $c \approx 0$, ticks stay below the litter, increasing reptile encounters but reducing contact with other hosts ($b_{i,L}$ and $b_{i,N}$ for $i \in \{1,2,3,4,6\}$ approach 0).

\section{Model Analysis}\label{modelanalysis}
In this section, we analyze the qualitative dynamics of Model~\eqref{fullsystem}:
\begin{equation*}
\begin{aligned}
    L(t+1)&=B(t)s_LS_L(t),\\
    N(t+1)&=s_NS_N(t)L(t),\\
    A(t+1)&=s_AS_A(t)s_NS_N(t)L(t),\\~\\
    H_{i}(t+1)&=\left(H_{i}(t)+\frac{\Lambda_i}{2}\left(1+e^{\frac{\mu_i}{2}}\right)\right)e^{-\mu_i}, \quad \text{for $i=1,2,3,4$},\\
    H_{5}(t+1)&=\left(H_{5}(t)+\Lambda_5\right)e^{-\mu_5},\\
    H_6\left(t+1\right)&=\left(H_6(t)e^{-\frac{\mu_6}{2}}+\Lambda_6\right)e^{-\frac{\mu_6}{2}}.
\end{aligned}
\end{equation*}
Since nonnegative initial conditions produce nonnegative solutions, the model is biologically well posed.

The host populations evolve independently of the tick dynamics, allowing the host subsystem to be analyzed separately. Each host equation is a linear first-order difference equation of the form 
\begin{align*}
    H(t+1)=mH(t)+f,
\end{align*}
where $0<m<1$. Consequently, every host population possess a unique globally asymptotically stable fixed point. These fixed points are given by
\begin{equation*}
    \begin{aligned}
        H_{i_\infty} &= \frac{\Lambda_i}{2} \left(\frac{e^{-\mu_i}+e^{-\frac{\mu_i}{2}}}{1-e^{-\mu_i}}\right), \quad \text{for $i \in \{1,2,3,4\}$},\\
        H_{5_{\infty}}&=\frac{\Lambda_{5}e^{-\mu_5}}{1-e^{-\mu_5}},\\
        H_{6_{\infty}}&=\frac{\Lambda_{6}e^{\frac{-\mu_6}{2}}}{1-e^{-\mu_6}}.
    \end{aligned}
\end{equation*}
These fixed points exist whenever $\mu_i>0$ for $i\in\{1,\ldots,6\}$. Because the host dynamics converge independently of the tick populations, we treat the host abundances as fixed at these values for the remainder of the analysis and focus on the resulting dynamics of the tick subsystem.

We can also analyze the $LA$ subsystem separately from the $N$ equation, as the $L$ and $A$ equations do not depend on $N$, which we do in two parts. First, we examine baseline formulations that assume host-finding success is independent of tick density, including both universal host-finding success ($S_j(t)=1$) and density-independent host-finding success ($S_j(t)=\hat{S}_j$) for $j \in \{L,N,A\}$. Second, we analyze the full model, in which host-finding success depends on both tick and host densities through host-to-tick ratios.

\subsection{{Baseline Models with Constant Host-Finding Success}}\label{simple_model_analysis}

Many tick population models assume that host-finding and host-finding success are either universal or constant, removing any dependence on the tick populations themselves. To establish a baseline for comparison with the full model, we first consider two such formulations.

We begin with the assumption that every tick locates and feeds on a host so that $S_j(t)=1$ for $j \in \{L,N,A\}$. Under this assumption, survival between stages depends only on environmental survival probabilities. Once host populations have reached equilibrium, the $LA$ subsystem of Model~\eqref{fullsystem} reduces to
\begin{equation}\label{simplifiedmodel}
    \begin{aligned}
    L(t+1)&=B(t)s_L,\\
    A(t+1)&=s_As_NL(t).
    \end{aligned}
\end{equation}

Relaxing this assumption, suppose instead that host-finding success depends only on host abundance. Once host populations reach equilibrium, host-finding success becomes constant, so that $S_j(t)=\hat{S}_j$ with $0<\hat{S}_j<1$. In this case, the effective survival probability becomes $\hat{s}_j=s_j\hat{S}_j$, and the resulting system is identical to~\eqref{simplifiedmodel} after replacing $s_j$ with $\hat{s}_j$. Consequently, both forms of the model produce the same qualitative behavior, which we discuss next. Section \ref{populationdynamics_full} investigates the differences quantitatively. 

Model~\eqref{simplifiedmodel} has two fixed points:
\begin{enumerate}
    \item Extinction fixed point: $(L_0,A_0) = (0,0)$,
    \item Existence fixed point: $(L_\infty,A_\infty) = \left(\frac{2H_{6_{\infty}}(r{S}-K)}{{s}_N{s}_A}, 2H_{6_{\infty}}(r{S}-K)\right)$,
\end{enumerate}
where ${S}={s}_L{s}_N{s}_A/2$. The coexistence fixed point exists only when $\mathcal{R}_d=\frac{rS}{K}>1$, where $\mathcal{R}_d$ is the demographic reproductive number of the baseline model. This threshold is obtained by linearizing the system about the extinction fixed point and computing the spectral radius of the associated Jacobian matrix \citep{cushing1994net, cushing1989ebenman}.

The quantity $\mathcal{R}_d$ separates extinction from persistence. When $\mathcal{R}_d<1$, mortality exceeds replacement and the extinction fixed point is locally asymptotically stable (\ref{ap:qual_analysis}). Once $\mathcal{R}_d>1$, a positive fixed point emerges and becomes locally stable.

We next examine the second-generation system: 
\begin{equation*}
    \begin{aligned}
    L(t+2)&=\frac{r\frac{{s}_N{s}_AL(t)}{2}}{\frac{\frac{{s}_N{s}_AL(t)}{2}}{H_{6_{\infty}}}+K}{s}_L,\\
    A(t+2)&={s}_A{s}_NB(t){s}_L.
    \end{aligned}
\end{equation*}
In addition to the fixed points of the original system, the second-generation system admits the solution
\begin{align*}
    \left(L_{1_\infty},A_{1_\infty}\right)&=\left(\frac{2H_{6_{\infty}}(r{S}-K)}{{s}_N{s}_A},0\right), \quad \text{and} \quad \left(L_{2_\infty},A_{2_\infty}\right)=\left(0, 2H_{6_{\infty}}(r{S}-K)\right),
\end{align*}
whenever $\mathcal{R}_d>1$. Biologically, these points correspond to a synchronous 2-cycle in which one of the two overlapping annual cohorts is lost, leaving the reaming cohort to alternate between larval and adult stages over successive years. However, \ref{ap:second_gen_analysis} shows that this two-cycle is never stable.

\begin{figure}[!hp]
    \centering
    \includegraphics[width=0.5\linewidth]{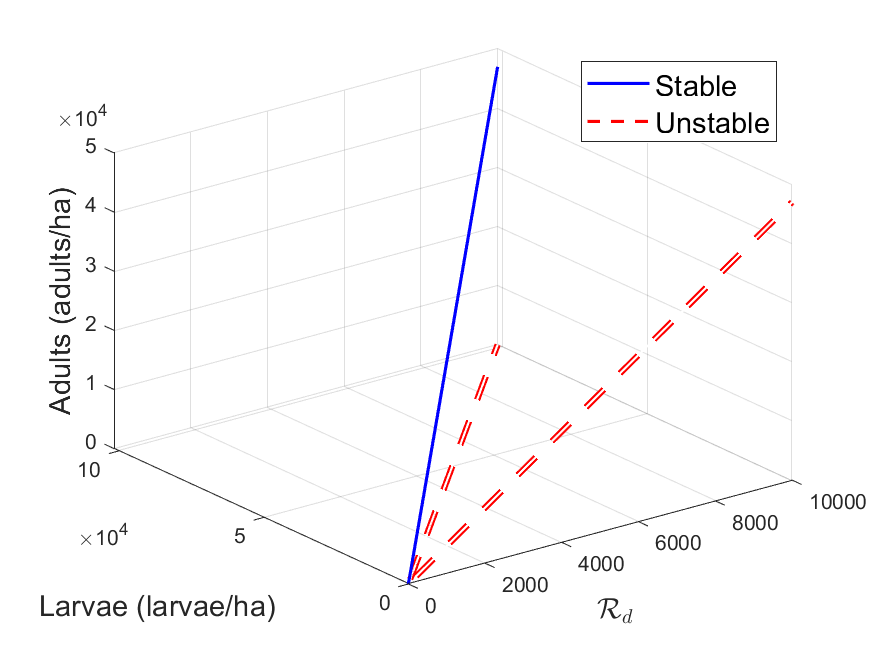}
    \caption{Three-dimensional bifurcation diagram for Model~\eqref{simplifiedmodel} under universal feeding showing larval and adult densities with respect to $\mathcal{R}_d$. Simulations use the northern parameter set from Table~\ref{paramvaluesnorthsouth}, with all parameters fixed except $r$, which was varied to change $\mathcal{R}_d$. The extinction and existence fixed points are shown as single lines, solid blue when stable and dashed red when unstable, while the 2-cycle is shown as a double line with the same color and line-style convention. The baseline model with fixed host-finding success ($S_j=\hat{S}_j$) exhibits the same qualitative structure and differs only in scale.}
    \label{bifurcation_simple}
\end{figure}

Figure~\ref{bifurcation_simple} summarizes the qualitative dynamics of the baseline model under universal host-finding success as the demographic reproductive number, $\mathcal{R}_d$, varies. Stable fixed points are shown as solid blue curves, unstable fixed points as dashed red curves, and the unstable synchronous 2-cycle as red double curves. When host-finding success is assumed constant ($S_j=\hat{S}_j$), the same bifurcation structure is obtained, with only the population sizes changing. 

At the threshold $\mathcal{R}_d=1$, the system simultaneously undergoes a transcritical bifurcation and a period-doubling bifurcation \citep{cushing1989ebenman}. The transcritical bifurcation occurs as an eigenvalue passes through $+1$, causing the extinction fixed point to lose stability while the positive fixed point becomes stable. At the same parameter value, the second iterate experiences a $-1$ bifurcation, generating a synchronous period-two solution. 

This 2-cycle reflects the two-year life cycle of \textit{I. scapularis}. Under normal conditions, two overlapping cohorts are present in the population. Along the synchronous 2-cycle, one of these cohorts is lost, leaving a single cohort that alternates between the larval and adult stages in successive years. From a mathematical perspective, this behavior follows from the Leslie projection matrix associated with the system, which is irreducible and imprimitive with index of imprimitivity two. The resulting periodic structure gives rise to the synchronous 2-cycle \citep{allen1989density}.

{Deriving the fourth-generation system and analyzing it shows that there are no additional solutions beyond those of the first- and second-generation systems. Therefore, there are no higher-order cycles, and the positive fixed point is globally stable whenever it is locally stable, except for trajectories initialized exactly on the unstable 2-cycle.}

\subsection{{Full Model}}
The nonlinearity introduced in the full system (Model~\eqref{fullsystem}) prevents us from obtaining a closed-form expression for the positive fixed point. Nevertheless, some qualitative properties can be established analytically. After substituting the expression for $S_j(t)$ from Section~\ref{model}, the tick subsystem becomes
\begin{equation*}
    \begin{aligned}
    L(t+1)&=B(t)s_L\left(1 - e^{- \frac{z_L}{B(t)}\Sigma_L}\right),\\
    A(t+1)&=s_A\left(1 - e^{- \frac{z_A}{s_NS_N(t)L(t)}\Sigma_A}\right)s_N\left(1 - e^{- \frac{z_N}{L(t)}\Sigma_N}\right)L(t),
    \end{aligned}
\end{equation*}
for $L,A>0$.

The model always admits an extinction fixed point $(L,A)=(0,0)$, while a unique positive fixed point exists when $\mathcal{R}_d=rS/K>1$, with $S=s_Ls_Ns_A/2$ (see~\ref{ap:existence} for details).

Linearization about the extinction fixed point shows that its stability is determined by the same threshold as in the baseline model. Specifically, the extinction fixed point is locally asymptotically stable whenever $\mathcal{R}_d<1$ and loses stability at $\mathcal{R}_d=1$ through simultaneous $+1$ and $-1$ bifurcations (\ref{ap:full_qual_analysis}). Similarly, the positive fixed point is stable for $\mathcal{R}_d>1$ (\ref{ap:full_qual_analysis}).

To determine whether the full system supports oscillatory dynamics, we next examine the second-generation system. This system, given in \ref{ap:second_gen_full}, contains two additional fixed points corresponding to the endpoints of the same synchronous 2-cycle found in the baseline model. 

\section{Quantitative Results}
\label{simulations}
In this section, we discuss parameter estimates used to parameterize the model, detail our numerical analysis, and discuss the results. 

\subsection{Parameter Estimates}\label{paramestimates}

Model parameters were estimated separately for the northeastern and southeastern U.S. using region-specific data. The resulting parameter values are summarized in Table~\ref{paramvaluesnorthsouth}.

Host community composition was estimated from published surveys conducted within each region. When several species were grouped into a single host class, observed densities were converted to an equivalent representative species as described in Section~\ref{model}. Additional details on the northern dataset are given in \cite{peterson2026host}, while the southern estimates are summarized in Table~\ref{southernhostdensities}.

Because drag and flag sampling methods typically underestimate tick abundance, we based our baseline tick densities on mark-recapture measurements \citep{daniels2000estimating}, which reported densities of 115,000 larvae/ha, 11,500 nymphs/ha, and 3,450 adults/ha. Due to a lack of data for southern regions, particularly because common sampling methods are not effective in collecting juvenile ticks in those regions \citep{ginsberg2021lyme}, we assume the same baseline densities in the south.

Off-host survival proportions were taken from \cite{brunner2023off} and assumed to be identical across regions. The calibration constants $z_j$ were then chosen so that, under the baseline host community, the model reproduced the observed overall survival proportions of approximately 10\% for larvae, 10\% for nymphs, and 30\% for adults \citep{daniels2000estimating, ostfeld2023mouse}. 

Annual larval recruitment was determined by two parameters: the reproductive parameter $r$ and the saturation constant $K$. We estimated $r$ by combining published values for female fecundity, egg survival, and the average number of adult females feeding on an individual deer during a questing season \cite{piesman1979host, baker2024feeding}. This yielded an estimate of 2.52 million larvae/deer. The saturation parameter $K$ was then selected so that $\mathcal{R}_d>1$, ensuring persistence of the baseline population so that the effects of questing behavior on the tick population could be explored.

Host demographic parameters were estimated from published data. Recruitment was computed from reported reproductive output under the assumption of an equal sex ratio and was assumed to be the same in both the northern and southern regions. Mortality rates were chosen to satisfy the fixed-point conditions derived in Section~\ref{modelanalysis}. 

Stage-specific host preferences were estimated primarily from the field data of \cite{ginsberg2021lyme}. Northern preferences were based on observations from Massachusetts, New Jersey, and Wisconsin, whereas southern values used data from North Carolina, South Carolina, and Tennessee. Because deer were not included in these data, deer preferences were obtained from \cite{piesman1979role}. Preference estimates were then used to convert host abundances to representative-species densities, with scaling parameters $\sigma_{i,j}$ accounting for differences among tick life stages.

Because estimates of larval and nymphal body burden are scarce for many southern hosts, when direct measurements were unavailable, southern burdens were assumed to be 10\% of northern values to reflect lower attachment rates to mammalian hosts. Adult burdens were taken to be identical across regions because adult questing behavior is similar throughout the eastern U.S. \cite{arsnoe2019nymphal, ginsberg2021lyme}.

The baseline questing parameter $c$ was estimated from the observations of \cite{arsnoe2019nymphal}, who measured the proportion of time nymphs spent questing above the leaf litter in northern and southern field sites. Average values of 18.15\% and 2.04\% were used for the northern and southern parameter sets, respectively. Because comparable observations are unavailable for larvae, we assumed the same regional values for both immature life stages. 

\begin{table}[htbp!]
\centering
\resizebox{\textwidth}{!}{%
\begin{tabular}{llrrr}
\toprule
& & \multicolumn{2}{c}{\textbf{Value [Source]}} & \\
\cmidrule(lr){3-4}
\makecell{\textbf{Baseline}\\\textbf{Densities}} & \textbf{Description} & \textbf{North} & \textbf{South} & \textbf{Units} \\
\midrule
$L(0)$ & Larvae
    & $115{,}000$ \citep{daniels2000estimating}
    & $115{,}000$ \citep{daniels2000estimating}
    & larvae/ha \\
$N(0)$ & Nymphs
    & $11{,}500$ \citep{daniels2000estimating}
    & $11{,}500$ \citep{daniels2000estimating}
    & nymph/ha \\
$A(0)$ & Adults
    & $3{,}450$ \citep{daniels2000estimating}
    & $3{,}450$ \citep{daniels2000estimating}
    & adult/ha \\
$H_{1}(0)$ & Mice
    & $40$ \citep{peterson2026host}
    & $27.9$
    & mice/ha \\
$H_{2}(0)$ & Chipmunks
    & $30$ \citep{peterson2026host}
    & $30$
    & chipmunks/ha \\
$H_{3}(0)$ & Shrews
    & $59.2$ \citep{peterson2026host}
    & $30$
    & shrews/ha \\
$H_{4}(0)$ & Squirrels
    & $6$ \citep{peterson2026host}
    & $6$
    & squirrels/ha \\
$H_{5}(0)$ & Lizards
    & $27$ \citep{peterson2026host}
    & $445.6$
    & lizard/ha \\
$H_{6}(0)$ & Deer
    & $0.46$ \citep{peterson2026host}
    & $0.40$
    & deer/ha \\

\midrule
& & \multicolumn{2}{c}{\textbf{Value [Source]}} & \\
\cmidrule(lr){3-4}
\textbf{Parameters} & \textbf{Description} & \textbf{North} & \textbf{South} & \textbf{Units} \\
\midrule
$\Lambda_1$ & Recruitment of mice
    & $300^{a}$
    & $273.5^{a}$
    & mice/ha \\
$\Lambda_2$ & Recruitment of chipmunks
    & $50$ \citep{snyder1982tamias}
    & $50$ \citep{snyder1982tamias}
    & chipmunks/ha \\
$\Lambda_3$ & Recruitment of shrews
    & $659.8^{b}$
    & $174.9^{b}$
    & shrews/ha \\
$\Lambda_4$ & Recruitment of squirrels
    & $27^{c}$
    & $27^{c}$
    & squirrels/ha \\
$\Lambda_5$ & Recruitment of lizards
    & $36.1^{d}$
    & $1278.1^{d}$
    & lizards/ha \\
$\Lambda_6$ & Recruitment of deer
    & $0.32^{e}$
    & $0.21^{e}$
    & deer/ha \\

$s_L$ & Natural survival of larvae
    & $0.43$ \citep{brunner2023off}
    & $0.43$ \citep{brunner2023off}
    & dimensionless \\
$s_N$ & Natural survival of nymphs
    & $0.55$ \citep{brunner2023off}
    & $0.55$ \citep{brunner2023off}
    & dimensionless \\
$s_A$ & Natural survival of adults
    & $0.84$ \citep{brunner2023off}
    & $0.84$ \citep{brunner2023off}
    & dimensionless \\

$\mu_1$ & Natural death of mice
    & $3.12$
    & $3.55$
    & dimensionless \\
$\mu_2$ & Natural death of chipmunks
    & $1.21$
    & $1.21$
    & dimensionless \\
$\mu_3$ & Natural death of shrews
    & $3.77$
    & $2.73$
    & dimensionless \\
$\mu_4$ & Natural death of squirrels
    & $2.36$
    & $2.36$
    & dimensionless \\
$\mu_5$ & Natural death of lizards
    & $0.85$
    & $1.35$
    & dimensionless \\
$\mu_6$ & Natural death of deer
    & $0.68$
    & $0.52$
    & dimensionless \\

$\hat{b}_{1,L}$ & Relative proportion of larvae feeding on mice
    & $404.4$ \citep{ginsberg2021lyme}
    & $152.0$ \citep{ginsberg2021lyme}
    & larvae/mouse \\
$\hat{b}_{2,L}$ & Relative proportion of larvae feeding on chipmunks
    & $245.2$ \citep{ginsberg2021lyme}
    & $220.6$
    & larvae/chipmunk \\
$\hat{b}_{3,L}$ & Relative proportion of larvae feeding on shrews
    & $483.7$ \citep{ginsberg2021lyme}
    & $1{,}693.6$ \citep{ginsberg2021lyme}
    & larvae/shrew \\
$\hat{b}_{4,L}$ & Relative proportion of larvae feeding on squirrels
    & $911.3$ \citep{main1982immature}
    & $810.8$
    & larvae/squirrel \\
$\hat{b}_{5,L}$ & Relative proportion of larvae feeding on lizards
    & $35.3$ \citep{ginsberg2021lyme}
    & $25.7$ \citep{ginsberg2021lyme}
    & larvae/lizard \\
$\hat{b}_{6,L}$ & Relative proportion of larvae feeding on deer
    & $1{,}0817.1$ \citep{piesman1979role}
    & $962.3$
    & larvae/host 6 \\

$\hat{b}_{1,N}$ & Relative proportion of nymphs feeding on mice
    & $20.4$ \citep{ginsberg2021lyme}
    & $15.2$ \citep{ginsberg2021lyme}
    & nymphs/mouse \\
$\hat{b}_{2,N}$ & Relative proportion of nymphs feeding on chipmunks
    & $188.4$ \citep{ginsberg2021lyme}
    & $167.6$
    & nymphs/chipmunk \\
$\hat{b}_{3,N}$ & Relative proportion of nymphs feeding on shrews
    & $1.9$ \citep{ginsberg2021lyme}
    & $1.4$
    & nymphs/shrew \\
$\hat{b}_{4,N}$ & Relative proportion of nymphs feeding on squirrels
    & $1{,}527.8$ \citep{main1982immature}
    & $1{,}359.3$
    & nymphs/squirrel \\
$\hat{b}_{5,N}$ & Relative proportion of nymphs feeding on lizards
    & $0.4$ \citep{ginsberg2021lyme}
    & $3.0$ \citep{ginsberg2021lyme}
    & nymphs/lizard \\
$\hat{b}_{6,N}$ & Relative proportion of nymphs feeding on deer
    & $4{,}132.2$ \citep{piesman1979role}
    & $3{,}676.5$
    & nymphs/deer \\

$b_{6,A}$ & Relative proportion of adults feeding on deer
    & $1{,}037.5$ \citep{piesman1979role}
    & $1{,}037.5$ \citep{piesman1979role}
    & adults/deer \\

\makecell[l]{$\sigma_{1,N}$\\\vspace{0.1cm}}
& \makecell[l]{Scaling constant for preference of nymphs\\
(versus larval) for host group 1}
& ---
& \makecell[r]{$1.24$\\\vspace{0.1cm}}
& \makecell[r]{dimensionless\\\vspace{0.1cm}} \\

\makecell[l]{$\sigma_{3,N}$\\\vspace{0.1cm}}
& \makecell[l]{Scaling constant for preference of nymphs\\
(versus larval) for host group 3}
& \makecell[r]{$3.59$\\\vspace{0.1cm}}
& \makecell[r]{$0.90$\\\vspace{0.1cm}}
& \makecell[r]{dimensionless\\\vspace{0.1cm}} \\

\makecell[l]{$\sigma_{5,N}$\\\vspace{0.1cm}}
& \makecell[l]{Scaling constant for preference of nymphs\\
(versus larval) for host group 5}
& \makecell[r]{$1.07$\\\vspace{0.1cm}}
& \makecell[r]{$1.19$\\\vspace{0.1cm}}
& \makecell[r]{dimensionless\\\vspace{0.1cm}} \\

\makecell[l]{$\sigma_{6,N}$\\\vspace{0.1cm}}
& \makecell[l]{Scaling constant for preference of nymphs\\
(versus larval) for host group 6}
& \makecell[r]{$1.25$\\\vspace{0.1cm}}
& \makecell[r]{$1.00$\\\vspace{0.1cm}}
& \makecell[r]{dimensionless\\\vspace{0.1cm}} \\

\makecell[l]{$\sigma_{6,A}$\\\vspace{0.1cm}}
& \makecell[l]{Scaling constant for preference of adults\\
(versus larval) for host group 6}
& \makecell[r]{$1.08$\\\vspace{0.1cm}}
& \makecell[r]{$1.00$\\\vspace{0.1cm}}
& \makecell[r]{dimensionless\\\vspace{0.1cm}} \\

$r$ & Fecundity of adult ticks
    & $2{,}520{,}000$ \citep{baker2024feeding, piesman1979role}
    & $2{,}520{,}000$
    & larvae/host 6 \\

\makecell[l]{$K$\\\vspace{0.1cm}}
& \makecell[l]{Tick-to-host ratio at which tick reproduction is at\\
half its maximum}
& \makecell[r]{$5$\\\vspace{0.1cm}}
& \makecell[r]{$5$\\\vspace{0.1cm}}
& \makecell[r]{adult/host 6\\\vspace{0.1cm}} \\

$z_L$ & Larval host-finding success calibration constant
    & $1.24$
    & $1.45$
    & dimensionless \\
$z_N$ & Nymphal host-finding success calibration constant
    & $1.30$
    & $2.58$
    & dimensionless \\
$z_A$ & Adult host-finding success calibration constant
    & $1.79$
    & $2.23$
    & dimensionless \\
$c$ & Proportion of questing season spent above the leaf litter
    & $0.18$ \citep{arsnoe2019nymphal}
    & $0.02$ \citep{arsnoe2019nymphal}
    & dimensionless \\

\bottomrule
\end{tabular}
}

\caption{Baseline densities and parameters used for the northeast (North) and southeast (South) parameterizations of Model~\eqref{fullsystem}, with their descriptions, estimated values, and units. Parameters without cited sources were calculated as described in Section~\ref{paramestimates} and Appendix~\ref{ap:paramcalc}. Host densities for the South parameterization were estimated from literature (see Table~\ref{southernhostdensities}).\\
$^{a}$\citep{morris1986proximate, mehlhop1978population, chitty1941territorial}\\
$^{b}$\citep{innes1990high, whitaker1998mammals, merritt1981clethrionomys, innes1984life}\\
$^{c}$\citep{layne1952genitale, barkalow1970vital, kemp1970dynamics}\\
$^{d}$\citep{COSEWIC2007, seburn1990population, tinkle1972sceloporus}\\
$^{e}$\citep{verme1984physiology, lay1942ecology, johnson2016habitat}}
\label{paramvaluesnorthsouth}

\end{table}

Numerical solutions to Model~\eqref{fullsystem} were obtained in MATLAB\textsuperscript{\textregistered} using \verb|fsolve| and are shown in the next section. To identify biologically relevant equilibria, the solver was initialized using multiple starting guesses over feasible ranges for tick and host populations. Stability of each solution was assessed using eigenvalues of numerically computed Jacobian matrices evaluated at the corresponding fixed point. A similar procedure was followed using the second-generation system to find 2-cycles and their stability.

\subsection{{Impact of Feeding Assumptions on Qualitative Dynamics}}\label{populationdynamics_full}

\begin{figure}[!hp]
%\centerline{
\centering
\begin{subfigure}[t]{0.5\textwidth}
    \centering
    \includegraphics[width=\linewidth]{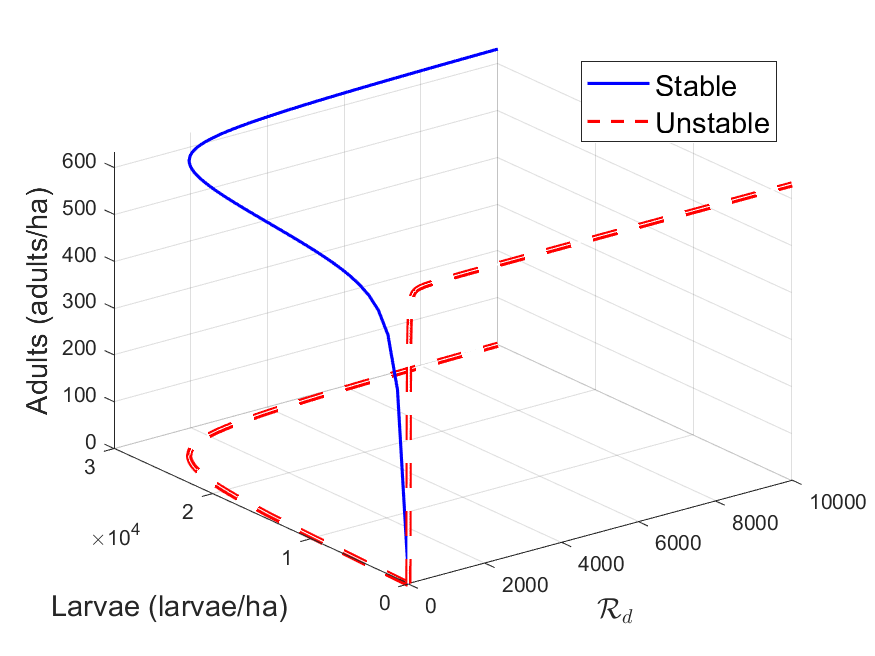}
    \caption{}
    \label{fig:2a}
\end{subfigure}%
\begin{subfigure}[t]{0.5\textwidth}
    \centering
    \includegraphics[width=\linewidth]{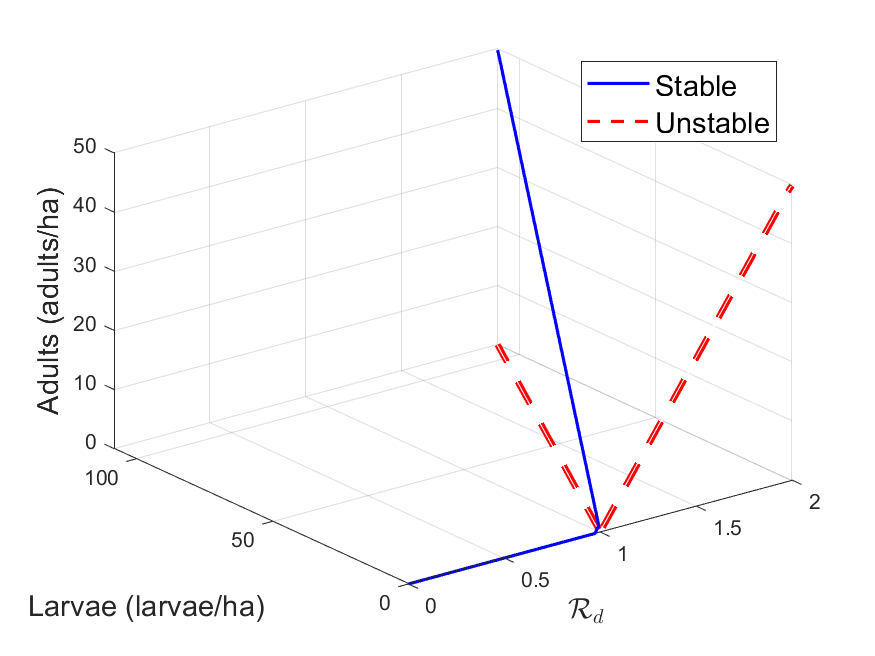}
    \caption{}
    \label{fig:2b}
\end{subfigure}%
\caption{Bifurcation diagrams for Model~\eqref{fullsystem} of the larval and adult populations with respect to the demographic reproductive value $\mathcal{R}_d=rS/K$ using the northern parameter set from Table~\ref{paramvaluesnorthsouth}, with all parameters fixed except $r$, which was varied to change $\mathcal{R}_d$. Fixed points are shown as a single line, solid blue when stable and dashed red when unstable, while the 2-cycle is shown as a double line with the same color and line-style convention. Subfigure~\ref{fig:2b} provides a zoomed view of subfigure~\ref{fig:2a}, highlighting the bifurcations at $\mathcal{R}_d=1$.}
\label{bifurcation_diagrams_rS}
\end{figure}

Figure~\ref{bifurcation_diagrams_rS} shows bifurcation diagrams for Model~\eqref{fullsystem} with respect to the demographic reproductive value $\mathcal{R}_d=rS/K$, where $S=s_Ls_Ns_A/2$, using the northern parameter set given in Table~\ref{paramvaluesnorthsouth}. Fixed points are shown as a single line (solid blue when stable, dashed red when unstable), while 2-cycle solutions are shown as double lines with the same stability convention. Subfigure~\ref{fig:2b} displays a zoomed view of subfigure~\ref{fig:2a} to highlight the bifurcations that occur at $\mathcal{R}_d=1$.

As in the baseline model, the threshold $\mathcal{R}_d=1$ corresponds to a $+1/-1$ double bifurcation. At this point, a transcritical bifurcation exchanges stability between the extinction fixed point and the positive existence fixed point, while a simultaneous period-doubling bifurcation generates an unstable 2-cycle.

For larger values of $\mathcal{R}_d$, the three model formulations exhibit distinct quantitative behavior. Under the constant (fixed) and universal host-finding success assumptions, fixed-point tick densities increase approximately linearly with $\mathcal{R}_d$. In the case of universal host-finding success, the model predicts substantially higher fixed-point densities than the ratio-dependent case. In contrast, densities under the ratio-dependent feeding model increase at a saturating rate and approach a limiting value as $\mathcal{R}_d$ becomes large. These results demonstrate that the simplifying assumptions preserve the qualitative dynamics of the system but alter fixed-point population sizes.  

While Figures~\ref{bifurcation_simple} and~\ref{bifurcation_diagrams_rS}  demonstrate that the three model formulations exhibit similar qualitative dynamics, the magnitude of the quantitative differences warrants further investigation.

\subsection{{Population Effects of Ratio-Dependent Host-Finding Success}}

\begin{figure}[!hp]
%\centerline{
\centering
\begin{subfigure}[t]{0.5\textwidth}
    \centering
    \includegraphics[width=\linewidth]{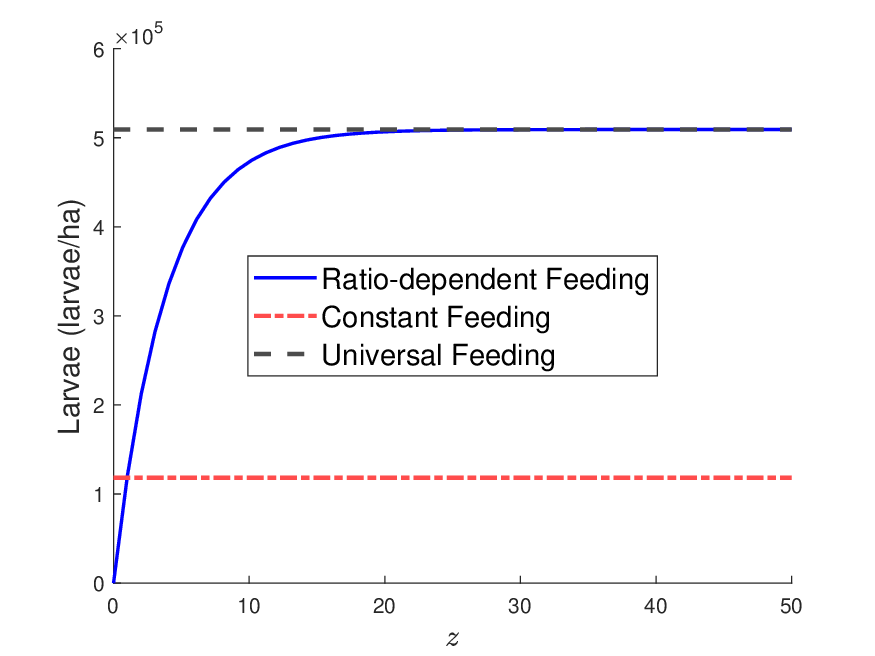}
    \caption{}
    \label{fig:3a}
\end{subfigure}%
\begin{subfigure}[t]{0.5\textwidth}
    \centering
    \includegraphics[width=\linewidth]{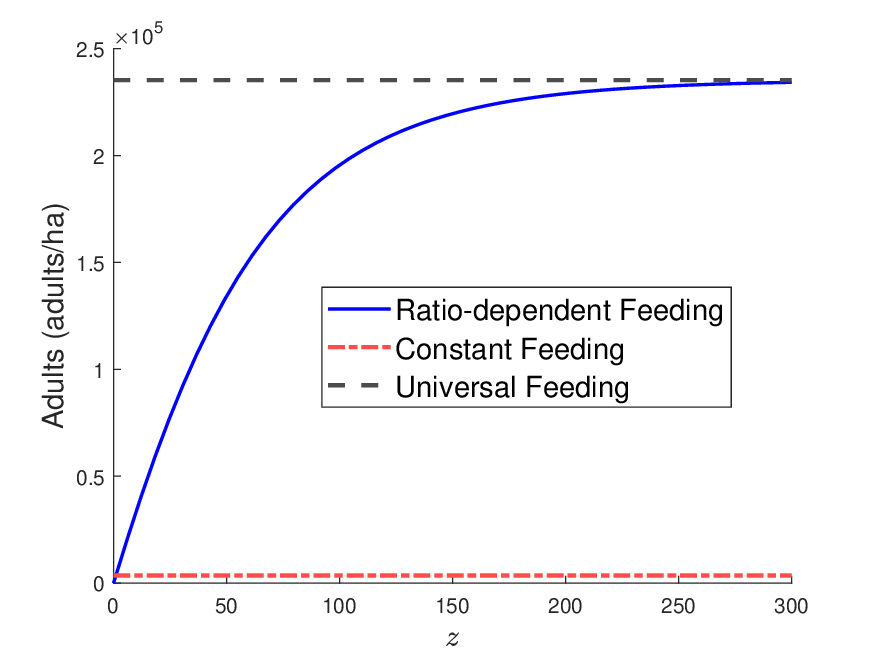}
    \caption{}
    \label{fig:3b}
\end{subfigure}%
\caption{Fixed-point dynamics of the larval and adult populations under the constant host-finding success assumption (dash-dotted red line), the universal host-finding success assumption (dashed black line) and the ratio-dependent host-finding success assumption (solid blue line). Population densities are with respect to $z$, which was used to uniformly scale the host-finding success calibration constant $z_j$ across all tick stages. Results shown use parameter estimates from the northeastern U.S. (Table~\ref{paramvaluesnorthsouth}).}
\label{bifurcation_diagrams_z}
\end{figure}
Figure~\ref{bifurcation_diagrams_z} compares fixed-point larval and adult densities under the constant (fixed) host-finding success, universal host-finding success, and ratio-dependent host-finding success assumptions as the host-finding calibration parameters $z_j$ are scaled uniformly by $z$. Increasing $z$ increases host-finding success and therefore provides a means of examining the effect of the ratio-dependent feeding term on fixed-point population densities. 

Across both life stages, fixed-point densities predicted by the ratio-dependent formulation increase monotonically with $z$. The universal host-finding success model is obtained as the limiting case as $z_j \to \infty$. Consequently, for large values of $z$, the ratio-dependent formulation asymptotically approaches the universal host-finding success case. In contrast, the constant host-finding success model assumes the product $s_jS_j(t)$ remains fixed at its calibrated baseline value, producing a single fixed-point value that is independent of $z$. For all tick stages, both the constant and universal host-finding success models predict higher fixed-point densities at the calibrated $z_j$ values (corresponding to $z=1$) than the ratio-dependent form.

Thus, although the simplified feeding assumptions preserve the qualitative structure of the system, they produce different quantitative results.

\subsection{{Population Effects of Questing Behavior}}\label{stability_fp}

Figure~\ref{bifurcation_diagrams_c} presents bifurcation diagrams with respect to the questing behavior parameter $c$ for the northern (left columns) and southern (right column) parameter sets given in Table~\ref{paramvaluesnorthsouth}. Because the chosen parameter values satisfy $\mathcal{R}_d>1$, the bifurcations seen in Figure~\ref{bifurcation_diagrams_rS} are not displayed in these figures. Instead, Figure~\ref{bifurcation_diagrams_c} illustrates how variation in questing behavior influences the size of the tick population.

For both parameter sets, fixed-point tick densities increase monotonically with $c$, reflecting improved host-finding success. The magnitude of this increase, however, differs between regions. For very low values of $c$, northern populations approach extinction, whereas southern populations remain above these levels. Interestingly, southern populations exceed those in the north over a portion of the range of $c$. For $c$ larger than approximately 0.83, northern population densities eventually exceed those of the southern system.

Differences between life stages are also evident from the figures. Adult densities approach their limit more rapidly than larval densities, while larval populations continue increasing almost linearly over the range of $c$ considered. 

\begin{figure}[!hp]
%\centerline{
\centering
\begin{subfigure}[t]{0.5\textwidth}
    \centering
    \includegraphics[width=\linewidth]{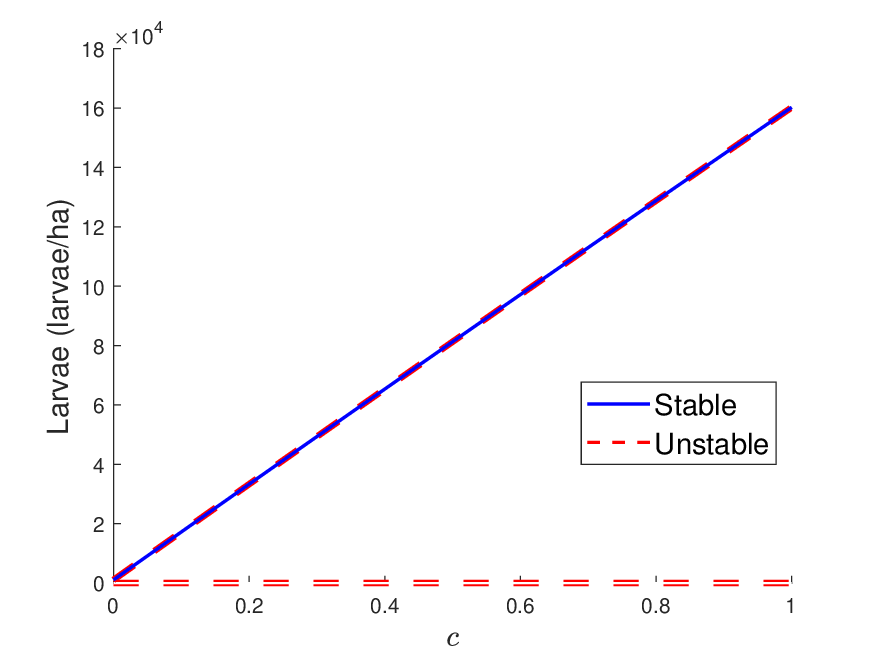}
    \caption{}
    \label{fig:4a}
\end{subfigure}%
\begin{subfigure}[t]{0.5\textwidth}
    \centering
    \includegraphics[width=\linewidth]{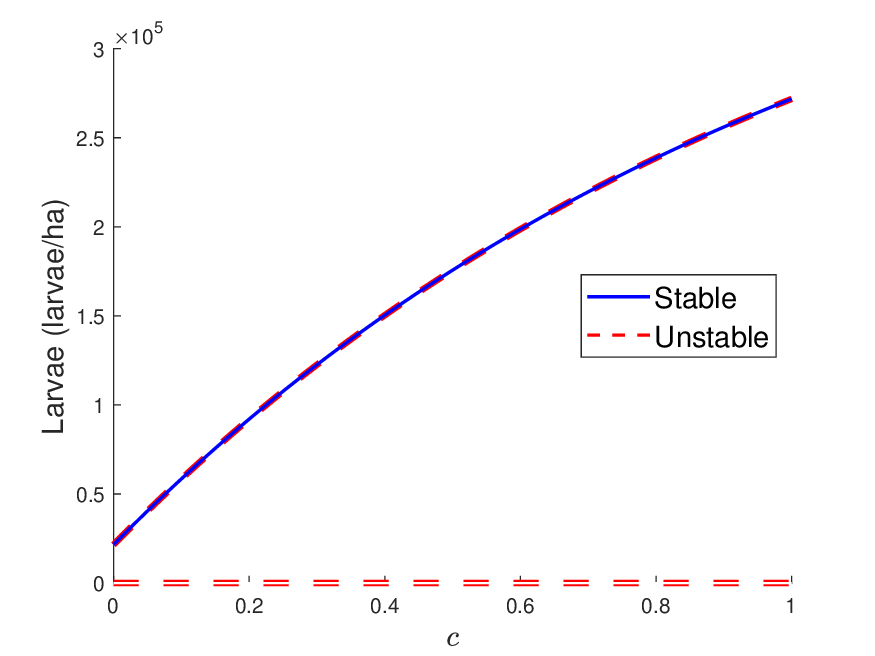}
    \caption{}
    \label{fig:4b}
\end{subfigure}%
\hfill
\begin{subfigure}[t]{0.5\textwidth}
    \centering
    \includegraphics[width=\linewidth]{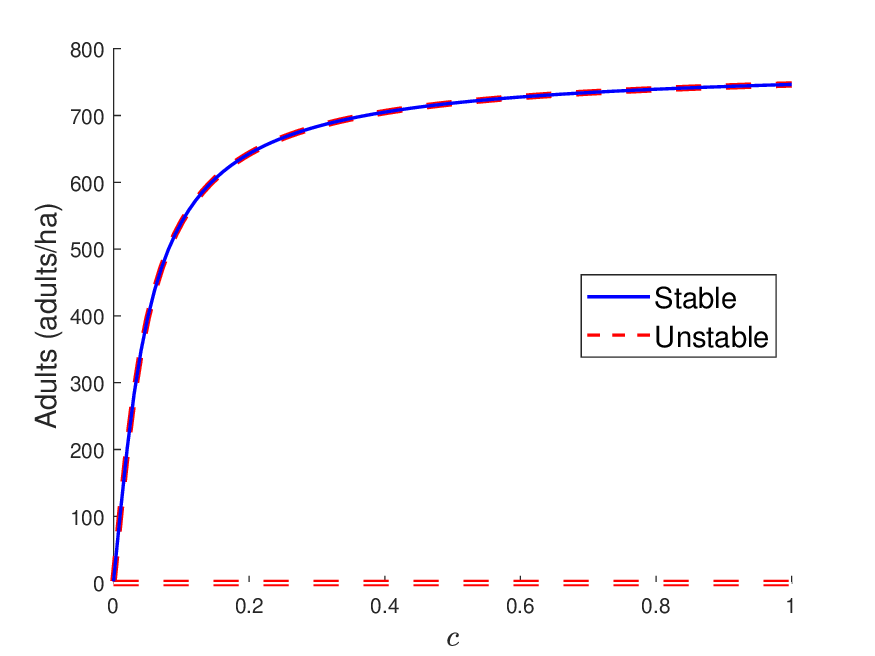}
    \caption{}
    \label{fig:4c}
\end{subfigure}%
%\hfill
\begin{subfigure}[t]{0.5\textwidth}
    \centering
    \includegraphics[width=\linewidth]{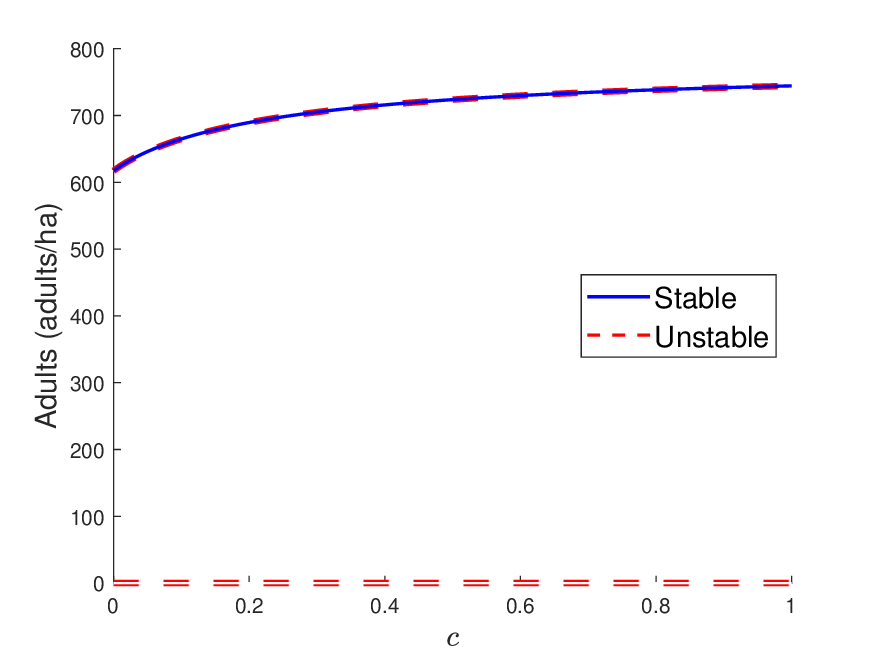}
    \caption{}
    \label{fig:4d}
\end{subfigure}%
\hfill
\begin{subfigure}[t]{0.5\textwidth}
    \centering
    \includegraphics[width=\linewidth]{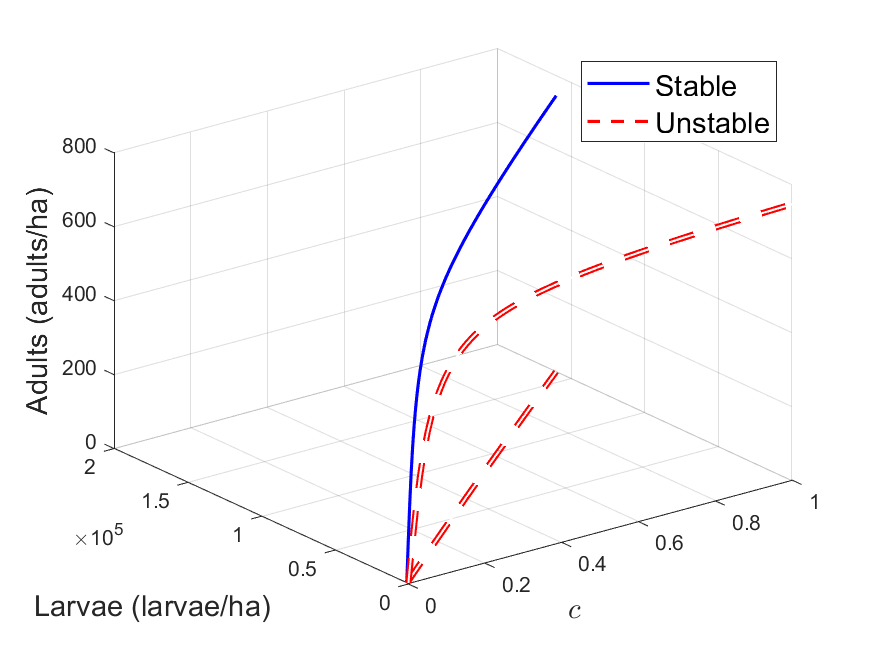}
    \caption{}
    \label{fig:4e}
\end{subfigure}%
\begin{subfigure}[t]{0.5\textwidth}
    \centering
    \includegraphics[width=\linewidth]{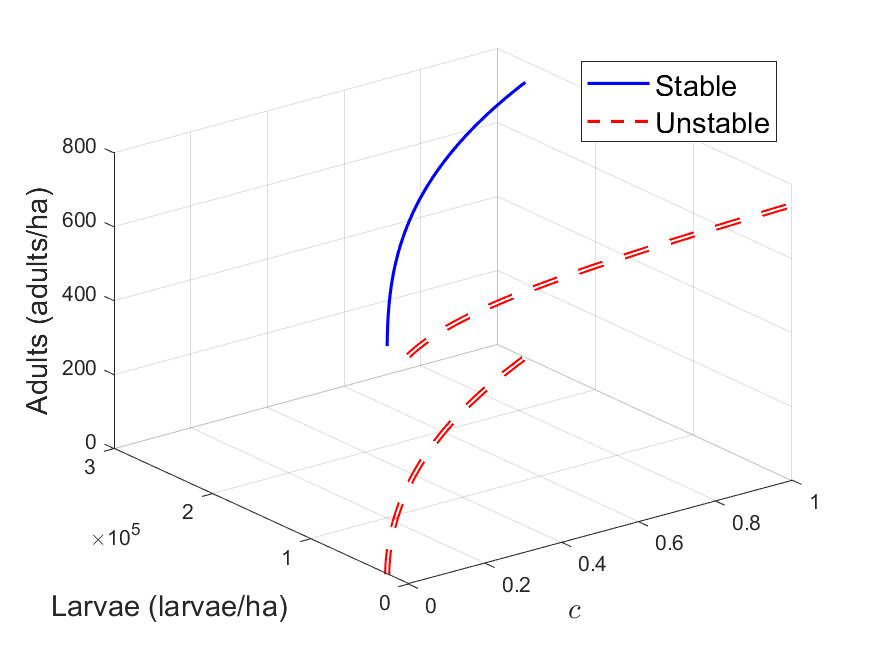}
    \caption{}
    \label{fig:4f}
\end{subfigure}%
\caption{Bifurcation diagrams for larval and adult populations with respect to $c$ using the northern parameter set (left column) and the southern parameter set (right column) from Table~\ref{paramvaluesnorthsouth} In each subfigure, the existence fixed point is shown as a single line, solid blue when stable and dashed red when unstable, while the 2-cycle is shown as a double line with the same color and line-style convention.}
\label{bifurcation_diagrams_c}
\end{figure}

\section{{Discussion and Conclusions}}
\label{discussion}

Ticks transmit numerous pathogens of public health concern. Identifying the ecological processes that regulate their populations is thus important for understanding patterns of disease risk. Tick population dynamics are governed by a sequence of events that depend not only on reproduction and environmental survival, but also on the ability of each life stage to locate and successfully feed on a host. Despite the importance of these host encounters, many mathematical models assume host-finding success is either constant or independent of tick density. In this study, we develop and analyze a stage-structured difference equation model that incorporates ratio-dependent host-finding success and regional differences in questing behavior. Parameterized for northeastern and southeastern ecosystems in the eastern United States, the model is used to investigate how host availability and questing behavior influence tick persistence, abundance, and the qualitative dynamics of tick populations.  

We first performed a qualitative analysis of two baseline models, one obtained when host-finding success is fixed and another obtained in the limiting case when the host-finding calibration parameter $z_j$ approaches infinity, yielding universal attachment success (Model~\eqref{simplifiedmodel}). Under both simplifications, tick persistence depended on reproduction and stage-specific survival. While period doubling in the form of a 2-cycle emerged, the 2-cycle was always unstable, indicating that in the absence of additional ecological mechanisms, the system tends toward a fixed equilibrium. These simplified models provide a baseline against which to measure the impact of ratio-dependent host-finding success and questing behavior introduced in the full model. 

Analysis of the full model (Model~\eqref{fullsystem}), which could only be completed numerically due to highly nonlinear dynamics, showed that introducing ratio-dependent host-finding success does not fundamentally change model dynamics qualitatively. The same threshold, $\mathcal{R}_d=1$, separates extinction from persistence, and the same $+1/-1$ bifurcation occurs at this threshold. Furthermore, the period-doubling solutions remain unstable, indicating that the introduction of ratio-dependent host-finding success alone is not sufficient to generate sustained oscillatory dynamics. This robustness of the threshold result shows that the demographic reproductive value remains the primary determinant of long-term dynamics, even when host and tick limitations are incorporated.

Although ratio-dependent attachment does not qualitatively change the dynamics of the system, it does alter quantitative results. In the simplified models, fixed-point population densities increase linearly with increasing values of $\mathcal{R}_d$. In contrast, the full model exhibits a saturation effect, where population growth slows as tick abundance increases relative to host availability. Consequently, tick and host abundances act as regulatory mechanisms that constrain population expansion even under favorable demographic conditions. These findings demonstrate that models assuming fixed or universal host-finding success may substantially overestimate tick densities. 

Numerical simulations showed that questing behavior strongly influences population abundance by determining which hosts are available for blood feeding. Increasing the questing parameter $c$, corresponding to greater time allocated to questing above the leaf litter, consistently increased population densities by improving host-finding success. Conversely, low values of $c$ reduced feeding opportunities, particularly in northern ecosystems.

The effects of questing behavior differed between northern and southern regions due to differences in host community composition. At low values of $c$, southern populations persisted at higher densities, whereas northern populations approached extinction. This reflects the greater abundance of below-litter hosts available to feeding ticks in the south. Northern communities do not support a sufficient number of these hosts, and thus tick populations that quest below the leaf litter are at risk of extinction. As $c$ increased, however, northern populations grew more rapidly and eventually exceeded southern populations, reflecting a greater abundance of above-litter hosts in the northern parameter set. 

Interestingly, because the assumed southern host density is higher overall compared to the northern one, southern tick populations exceeded northern populations over a large portion of the biologically relevant range of questing behavior ($0<c<0.83$). However, when comparing baseline $c$ values for each region ($c \approx 0.02$ in the south and $c \approx 0.18$ in the north), predicted southern population densities remain lower than northern densities, consistent with current field observations \cite{ginsberg2021lyme, foster2024density}. This finding suggests that differences in questing behavior as well as host community composition contribute to the observed geographic variation in tick abundance. Moreover, the model predicts that if southern populations exhibited questing behavior similar to the northern populations (without increased death from environmental factors), their equilibrium densities could exceed those observed in the north, which may have further implications for disease risk. %As noted by \cite{ginsberg2021lyme}, reported tick densities in the southern U.S. are often much lower than in the north; however, commonly used drag and flag sampling methods may underestimate southern populations because of regional differences in questing behavior. Our results highlight that these large observed differences in densities between regions may partially reflect sampling limitations rather than differences in population size. 

Overall, these findings highlight how two key mechanisms--ratio-dependent host attachment and questing behavior--contribute to tick population dynamics. First, ratio-dependent host-finding success primarily influences population size by limiting blood-meal availability as tick abundance increases relative to host density. Second, questing behavior determines both the frequency and composition of host encounters, influencing successful feeding and shaping regional differences in tick abundance.   

{Although pathogen transmission is not included in the present model, the demographic behaviors identified here provide an important foundation for understanding how to assess Lyme disease risk. Questing behavior can affect transmission through two pathways: by changing the total abundance of ticks that survive to later life stages and by changing the relative frequency with which immature tick feed on different host species. Because mammals and reptiles differ in their competence for \textit{B. burgdorferi} \citep{logiudice2003ecology, ginsberg2021lyme, giery2007role}, a shift in questing behavior may alter infection risk even when its effect on total tick abundance is low. Consequently, tick densities alone may be insufficient to characterize disease risk; the host composition of tick blood meals must also be considered.}  

This study {provides an initial framework for examining} how questing behavior and ratio-dependent attachment shape tick population dynamics; however, several limitations should be considered when interpreting the results. The host community considered here represents only a subset of the vertebrate species utilized by \textit{Ixodes scapularis} which are host generalists and feed on over 125 species \citep{keirans1996ixodes}. Birds were excluded because of limited density estimates and their relatively small contribution to overall tick demography \citep{giardina2000modeling, peterson2026host}, although they may nevertheless play an important role in tick dispersal \cite{hamer2010invasion, hamer2012wild, loss2016quantitative}. In addition, gaps in our understanding of southern tick populations and their hosts required approximating or borrowing several parameters from northern systems. Thus, although the model captures qualitative differences between northern and southern populations, uncertainty in the southern parameterization may affect the magnitude of the predicted responses. Finally, the model treats parameters independently when exploring their effects, despite the fact that many biological traits are likely correlated. For example, greater above-litter questing may increase host-finding opportunities while simultaneously decreasing environmental survival, particularly under the warmer and drier conditions characteristic of southern regions \citep{leal2020questing, eisen2016linkages}.

Several extensions could further develop this framework. Incorporating additional host species or logically linked parameter changes would provide a more comprehensive representation of tick ecology. {This work can also be extended to consider the alternative hypothesis that tick host-finding success is instead influenced by previous tick abundances rather than current abundance, reflecting host acquired tick resistance or learned behavior changes \citep{peterson2026modeling}}. Future work could incorporate pathogen transmission dynamics, which would allow the model to connect the effects of questing behavior and ratio-dependent attachment to patterns of disease transmission under northern and southern conditions.

Ultimately, this study shows that {population persistence depends on demographic reproduction through environmental survival, questing behavior, and host availability}. Questing behavior regulates opportunities for host encounters and determines which host communities support tick populations, whereas ratio-dependent attachment constrains population growth through competition for finite host resources. Incorporating both processes into mathematical models provides a more biologically realistic framework for predicting tick abundance. 

\appendix

\section{Sequence of Seasonal Events}\label{ap:timeline}
The annual update is implemented through the following sequence of events, with each event occurring at the indicated fractional time step.
\begin{enumerate}[1)]
    \item Nymphs experience environmental mortality and host-finding failure:
    \begin{align*}
        N\left(t+\frac{1}{7}\right)&=s_NN\left(t\right)\left(1-\exp\left(-\frac{z_N}{L(t)}\sum_{i=1}^6 b_{i,N}\sigma_{i,N}H_i(t))\right)\right)
    \end{align*}
    \item Surviving nymphs attach to and feed on hosts:
    \begin{align*}
        N\left(t+\frac{2}{7}\right) = N\left(t+\frac{1}{7}\right)
    \end{align*}
    \item The first half of annual host recruitment occurs for groups 1--4; larvae emerge, and nymphs transition to the adult stage:
    \begin{align*}
        H_{i}\left(t+\frac{3}{7}\right)&=H_{i}\left(t+\frac{2}{7}\right)+\frac{\Lambda_i}{2} \quad \text{for $i=1,2,3,4$}\\
         H_{5}\left(t+\frac{3}{7}\right)&=H_5\left(t+\frac{2}{7}\right)+\Lambda_5\\
        H_6\left(t+\frac{3}{7}\right)&=H_6\left(t+\frac{2}{7}\right)+\Lambda_6\\~\\
           L\left(t+\frac{3}{7}\right) &= \frac{r\frac{A\left(t-\frac{1}{7}\right)}{2}}{\frac{\frac{A\left(t-\frac{1}{7}\right)}{2}}{H_6\left(t-\frac{1}{7}\right)}+K}\\~\\
           A\left(t+\frac{3}{7}\right)&=N\left(t+\frac{2}{7}\right)\\~\\
            N\left(t+\frac{3}{7}\right)&=0\\
    \end{align*}
    \item Larvae experience environmental mortality and host-finding failure, while the first half of annual host mortality occurs:
    \begin{align*}
         L\left(t+\frac{4}{7}\right)&=s_LL\left(t+\frac{3}{7}\right)\left(1-\exp\left(-\frac{z_L}{B\left(t+\frac{3}{7}\right)\sum_{i=1}^6 b_{i,L}H_i\left(t+\frac{3}{7}\right)}\right)\right)\\~\\
        H_{i}\left(t+\frac{4}{7}\right) &= H_{i}\left(t+\frac{3}{7}\right)\exp\left(-\frac{\mu_{i}}{2}\right) \quad \text{for $i=1,2,3,4,5$}\\
        H_6\left(t+\frac{4}{7}\right)&= H_6\left(t+\frac{3}{7}\right)\exp\left(-\frac{\mu_{6}}{2}\right)
    \end{align*}    
    \item Surviving larvae attach to and feed on hosts while adults experience environmental mortality and host-finding failure:
    \begin{align*}
        L\left(t+\frac{5}{7}\right)&=L\left(t+\frac{4}{7}\right)\\~\\
        A\left(t+\frac{5}{7}\right)&=s_AA\left(t+\frac{4}{7}\right)\left(1-\exp\left(-\frac{z_A}{N\left(t+\frac{2}{7}\right)} b_{6,A}\sigma_{6,A}H_6\left(t+\frac{4}{7}\right)\right)\right)
    \end{align*}
    \item Larvae transition to nymphs, adults reproduce, and the remaining annual recruitment occurs for host groups 1--4:
    \begin{align*}
        N\left(t+\frac{6}{7}\right)&=L\left(t+\frac{5}{7}\right)\\~\\
        L\left(t+\frac{6}{7}\right)&=0\\~\\
        H_{i}\left(t+\frac{6}{7}\right)&=H_{i}\left(t+\frac{5}{7}\right)+\frac{\Lambda_i}{2} \quad \text{for $i=1,2,3,4$}\\~\\
        A\left(t+\frac{6}{7}\right)&=A\left(t+\frac{5}{7}\right)
    \end{align*}
    \item Adult ticks are removed following reproduction, and the remaining annual host mortality is applied:
    \begin{align*}
        A\left(t+1\right)&=0\\~\\
        H_{i}\left(t+1\right) &= H_{i}\left(t+\frac{6}{7}\right)\exp\left(-\frac{\mu_{i}}{2}\right) \quad \text{for $i=1,2,3,4,5$}\\
        H_6\left(t+1\right)&= H_6\left(t+\frac{6}{7}\right)\exp\left(-\frac{\mu_{6}}{2}\right)
    \end{align*}
\end{enumerate}

\section{Qualitative Analysis Derivation}
\subsection{{Baseline Model Stability Analysis}}\label{ap:qual_analysis}
Assuming tick host-finding success is constant (either fixed at a baseline value or with full host-finding success), the baseline tick subsystem of Model~\eqref{fullsystem} is
\begin{equation*}
    \begin{aligned}
    L(t+1)&=\frac{rA(t)/2}{\frac{A(t)/2}{H_{6_{\infty}}}+K}s_L\\
    N(t+1)&=s_NL(t)\\
    A(t+1)&=s_As_NL(t).
    \end{aligned}
\end{equation*}
Noting that the $LA$ subsystem decouples from the $LNA$ system, we can perform the analysis on just the two-dimensional system. Solving for equilibria gives:
\begin{align*}
    (L_0,A_0) = (0,0), \quad (L_\infty,A_\infty) = \left(\frac{2H_{6_{\infty}}(rS-K)}{s_Ns_A}, 2H_{6_{\infty}}(rS-K)\right),
\end{align*}
where $S=s_Ls_Ns_A/2$. The second fixed point exists biologically iff $\frac{rS}{K}>1$.

The Jacobian of the system is
\begin{align*}
    J(L,A)=
    \begin{bmatrix}
        0&\frac{2H_{6_\infty}^2Krs_L}{(A+2H_{6_\infty}K)^2}\\
        s_Ns_A&0
    \end{bmatrix}.
\end{align*}
At the extinction fixed point $(0,0)$:
\begin{align*}
    J(L_0,A_0)=
    \begin{bmatrix}
        0&\frac{rs_L}{2K}\\
        s_Ns_A&0
    \end{bmatrix},
\end{align*}
with eigenvalues 
\begin{align*}
    \lambda_{1,2}=\pm \sqrt{\frac{rS}{K}}.
\end{align*}
Thus, the extinction fixed point is LAS iff $\frac{rS}{K}<1$.

At the existence fixed point:
\begin{align*}
    J(L_\infty,A_\infty)=
    \begin{bmatrix}
        0&\frac{s_LK}{2rS^2}\\
        s_Ns_A&0
    \end{bmatrix},
\end{align*}
yielding eigenvalues
\begin{align*}
     \lambda_{1,2}=\pm \sqrt{\frac{K}{rS}},
\end{align*}
so it is LAS iff $rS/K>1$. 

Note that in both the evaluation of the extinction and existence fixed point, the eigenvalues are $-1$ and $1$ when $rS/K=1$. This confirms analytically the $+1$/$-1$ bifurcation that occurs at $rS/K=1$, which is described in~\ref{ap:full_qual_analysis}. 

\subsection{Second Generation System Analysis - Baseline Model}\label{ap:second_gen_analysis}
To assess the potential for period-doubling behavior, we derive the second-generation system of the baseline model:
\begin{equation*}
    \begin{aligned}
    L(t+2)&=\frac{r\frac{s_Ns_AL(t)}{2}}{\frac{\frac{s_Ns_AL(t)}{2}}{H_{6_{\infty}}}+K}s_L,\\
    A(t+2)&=s_As_N\frac{rA(t)/2}{\frac{A(t)/2}{H_{6_{\infty}}}+K}s_L.
    \end{aligned}
\end{equation*}
This system admits two additional fixed points beyond those of the first-generation system:
\begin{align*}
    &\left(L_{1_\infty},A_{1_\infty}\right)=\left(\frac{2H_{6_{\infty}}(rS-K)}{s_Ns_A},0\right),\quad \left(L_{2_\infty},A_{2_\infty}\right)=\left(0,2H_{6_{\infty}}(rS-K)\right).
\end{align*}
These fixed points correspond to the two branches of a two-cycle and exist for $rS/K>1$.

To assess stability, we compute the Jacobian matrix of the second-generation map:
\begin{align*}
    J(L,A)=
    \begin{bmatrix}
        \frac{2H_{6_\infty}^2Krs_Ls_Ns_A}{(s_Ns_AL+2H_{6_\infty}K)^2}&0\\[5pt]
        0&\frac{2H_{6_\infty}^2Krs_Ls_Ns_A}{(A+2H_{6_\infty}K)^2}
    \end{bmatrix}.
\end{align*}
Evaluating the Jacobian at $\left(L_{1_\infty},A_{1_\infty}\right)$ yields
\begin{align*}
    J(L_{1_\infty},A_{1_\infty})=
    \begin{bmatrix}
        \frac{K}{rS}&0\\[5pt]
        0&\frac{rS}{K}
    \end{bmatrix}.
\end{align*}
Because this matrix is diagonal, its eigenvalues are given by the diagonal entries:  
\begin{align*}
    \lambda_1 = \frac{K}{rS},\quad \lambda_2 = \frac{rS}{K}.
\end{align*}
The two eigenvalues cannot both have magnitude less than one for any set of parameter values. Consequently, this branch of the two-cycle cannot be LAS. Hence, the two-cycle is never stable.

\subsection{{Full Model Fixed Point Analysis}}\label{ap:full_qual_analysis}

Consider the system
\begin{equation*}
    \begin{aligned}
    L(t+1)&=\frac{rA(t)/2}{\frac{A(t)/2}{H_6(t)}+K}s_LS_L(t),\\
    N(t+1)&=s_NS_N(t)L(t),\\
    A(t+1)&=s_AS_A(t)s_NS_N(t)L(t),
    \end{aligned}
\end{equation*}
where 
\begin{align*}
    S_L(t)&=\begin{cases}
    1 - e^{- \frac{z_L}{\frac{rA(t)/2}{\frac{A(t)/2}{H_6(t)}+K}}\Sigma_L}, & A(t)>0\\
    1, & A(t)=0\end{cases},\\
    S_N(t)&=\begin{cases}
    1 - e^{- \frac{z_N}{L(t)}\Sigma_N}, & L(t)>0\\
    1, & L(t)=0\end{cases},\\
    S_A(t)&=\begin{cases}
    1 - e^{- \frac{z_A}{s_NS_N(t)L(t)}\Sigma_A}, & L(t)>0\\
    1, & L(t)=0\end{cases}.\\
\end{align*}

Similar to the baseline model, the $LA$ subsystem decouples from the full model and can be analyzed separately. 

The Jacobian matrix of this system can be written as
\begin{equation}\label{Jacobian}
\begin{aligned}J=
    \begin{bmatrix}
        0 & \frac{L_{\infty}}{A_\infty}\left(\frac{K}{\frac{A_\infty/2}{H_{6_\infty}}+K}\right)\left(1+f(x_L)\right)\\
        \frac{A_\infty}{L_\infty}\left(1+f(x_N)\right)\left(1+f(x_A)\right) & 0
    \end{bmatrix},
\end{aligned}
\end{equation}
where $f(x)=\frac{-xe^{-x}}{1-e^{-x}}$ and 
\begin{align*}
    x_L=\frac{z_L \Sigma_L}{B_\infty}, \quad
    x_N = \frac{z_N\Sigma_N}{L_\infty}, \quad
    x_A = \frac{z_A\Sigma_A}{s_NS_{N_\infty}L_\infty}.
\end{align*}

Evaluating the Jacobian at the extinction fixed point by taking the entrywise limit as $(L_\infty,A_\infty)\to(0,0)$, we get
    \begin{align*}
    J(0,0)=
    \begin{bmatrix}
        0&\frac{rs_L}{2K}\\
        s_Ns_A&0
    \end{bmatrix}.
\end{align*}
The eigenvalues of this matrix are
\begin{align*}
\lambda_{1,2}=\pm \sqrt{\frac{rs_Ls_Ns_A}{2K}}.
\end{align*}
Thus, the extinction fixed point is LAS when $rS/K<1$ with $S = s_Ls_Ns_A/2$.

Note that the eigenvalues here are $-1$ and $1$ when $rS=K$, which confirms analytically the $+1$/$-1$ bifurcation discussed in Section~\ref{populationdynamics_full}.

Although the equations in Model~\eqref{fullsystem} are transcendental and thus cannot be solved explicitly for closed-form solutions, we can still assess stability conditions via the Jacobian~\eqref{Jacobian} and the Jury criterion. For a two-dimensional system, the Jury criterion guarantees stability when 
\begin{align*}
    |\tr(J)|<\det(J)+1<2.
\end{align*}
For our system, this corresponds to
\begin{align*}
-1<-\left(\frac{K}{\frac{A_\infty/2}{H_{6_\infty}}+K}\right)(1+f(x_L))(1+f(x_N))(1+f(x_A))<1.
\end{align*}
Because $f(x)$ is between $-1$ and $0$, we have that $(1+f(x))$ is between $0$ and $1$. Since the fractional term $\frac{K}{\frac{A_\infty/2}{H_{6_\infty}}+K}$ is also between $0$ and $1$, the compound inequality holds, and thus the positive existence fixed point is LAS when it exists, which corresponds to when $\frac{rS}{K}>1$ as detailed in the next section.

\subsection{{Existence of Positive Fixed Point in Full Model}}\label{ap:existence}
We now state and prove a necessary and sufficient condition for the existence of a unique positive fixed point of Model~\eqref{fullsystem}. 

\begin{thm}
Model~\eqref{fullsystem} has a unique positive fixed point if and only if
\[
\mathcal{R}_d=\frac{rS}{K} > 1,
\]
where $S=s_L s_N s_A/2$.
\end{thm}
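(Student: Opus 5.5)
The plan is to reduce the fixed-point problem for the $LA$ subsystem to a single scalar equation in $A$. The host populations are fixed at their equilibria $H_{i_\infty}$, so $\Sigma_L,\Sigma_N,\Sigma_A$ are positive constants. For a constant $c>0$ I will use the auxiliary function
\[
\varphi_c(x)=x\left(1-e^{-c/x}\right),\qquad x>0,
\]
extended by $\varphi_c(0)=0$. It has three properties I will need:
\begin{itemize}
\item $\varphi_c(x)/x=1-e^{-c/x}$ is strictly decreasing in $x$, tends to $1$ as $x\to 0^+$, and tends to $0$ as $x\to\infty$;
\item $\varphi_c$ is increasing, since $\varphi_c'(x)=1-e^{-u}-ue^{-u}>0$ with $u=c/x$;
\item $\varphi_c(x)<c$ for all $x$.
\end{itemize}
In this notation the model reads
\[
L(t+1)=s_L\varphi_{z_L\Sigma_L}(B(t)),\qquad N(t+1)=s_N\varphi_{z_N\Sigma_N}(L(t)).
\]
Since $s_NS_N(t)L(t)=N(t+1)$, the adult equation becomes $A(t+1)=s_A\varphi_{z_A\Sigma_A}(N(t+1))$.

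A positive fixed point $(L,A)$ with $A>0$ must satisfy $L=s_L\varphi_{z_L\Sigma_L}(B(A))$, where $B(A)=\dfrac{rA/2}{A/(2H_{6_\infty})+K}$. So $L$, and hence $N=s_N\varphi_{z_N\Sigma_N}(L)$, is uniquely determined by $A$. It is positive whenever $A>0$, and conversely $A>0$ forces $L>0$. The fixed-point problem is therefore equivalent to $A=F(A)$ with $A>0$, where
\[
F(A)=s_A\varphi_{z_A\Sigma_A}\!\Big(s_N\varphi_{z_N\Sigma_N}\big(s_L\varphi_{z_L\Sigma_L}(B(A))\big)\Big).
\]
Dividing by $A$, I will write $F(A)/A$ as a product of four factors:
\[
\frac{F(A)}{A}=\frac{B(A)}{A}\cdot\frac{s_L\varphi_{z_L\Sigma_L}(B)}{B}\cdot\frac{s_N\varphi_{z_N\Sigma_N}(L)}{L}\cdot\frac{s_A\varphi_{z_A\Sigma_A}(N)}{N}.
\]
The first factor $B(A)/A=\dfrac{r/2}{A/(2H_{6_\infty})+K}$ is strictly decreasing in $A$. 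Each inner argument $B$, $L$, $N$ is increasing in $A$, because compositions of increasing functions are increasing. Combined with the monotonicity of $\varphi_c(x)/x$, each remaining factor is nonincreasing in $A$. Hence $F(A)/A$ is strictly decreasing on $(0,\infty)$.

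Next I compute the two limits. As $A\to0^+$, all arguments tend to $0$, so each $\varphi$-ratio tends to $1$ and $B(A)/A\to r/(2K)$. This gives
\[
\lim_{A\to0^+}\frac{F(A)}{A}=\frac{rs_Ls_Ns_A}{2K}=\frac{rS}{K}=\mathcal{R}_d.
\]
As $A\to\infty$, the bound $F(A)\le s_Az_A\Sigma_A$ gives $F(A)/A\to0$. By continuity and strict monotonicity, $F(A)/A=1$ has exactly one solution in $(0,\infty)$ when $\mathcal{R}_d>1$. When $\mathcal{R}_d\le1$ there is no solution, because $F(A)/A<\mathcal{R}_d\le1$ for all $A>0$. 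Back-substitution then gives the unique $(L_\infty,N_\infty,A_\infty)$, which proves both directions.

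The step needing the most care is justifying that the composite factors are monotone in $A$, in particular that $S_A$ depends on $L$ only through $N=s_NS_NL$. This identification is what lets the adult factor be written as $\varphi_{z_A\Sigma_A}(N)/N$ with $N$ increasing in $A$. I would also check that the piecewise definitions $S_j=1$ at zero agree with the limits $\varphi_c(x)/x\to1$, so that the $A\to0^+$ limit is legitimate.
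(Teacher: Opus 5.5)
Your proof is correct. It uses the same reduction as the paper: compose the three stage maps and solve $A=F(A)$ on $(0,\infty)$. The key lemma, however, is different.

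The paper shows each stage map $f_i$ is increasing and strictly concave with $f_i(0)=0$. It concludes that $G=f_3\circ f_2\circ f_1$ is concave and reads existence and uniqueness off the slope $G'(0)=f_3'(0)f_2'(0)f_1'(0)=rS/K$.

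You instead factor $F(A)/A$ into per-capita stage ratios and show each is strictly decreasing. This uses only that $B(A)/A$ and $1-e^{-c/x}$ decrease while the inner arguments $B,L,N$ increase. It avoids second derivatives entirely, including the concavity computation for $f_1=s_L\varphi_{z_L\Sigma_L}\circ B$. The limit $\lim_{A\to0^+}F(A)/A$ plays the role of the one-sided derivative $G'(0)$.

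Your version is also tighter on two points the paper leaves implicit:
\begin{itemize}
\item Strict decrease of $F(A)/A$ gives uniqueness directly, whereas the paper's appeal to concavity of $G$ needs that concavity to be strict.
\item Your bound $F(A)\le s_Az_A\Sigma_A$, which forces $F(A)/A\to0$, is genuinely needed for the existence direction. Concavity together with $G'(0)>1$ does not by itself force a crossing (e.g.\ $G(A)=2A$), and the paper's proof never states that $G$ is bounded.
\end{itemize}

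Finally, your lemma is the same ingredient the paper uses later for stability. Saying $\varphi_c$ is increasing with $\varphi_c(x)/x$ decreasing is equivalent to saying its elasticity $1+f(c/x)$ lies in $(0,1)$. That is exactly the bound the paper invokes on the factors $1+f(x_j)$ in its Jacobian for the Jury-criterion argument.
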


\begin{proof}
Recall the full system:
\begin{align*}
    L(t+1)&=B(A)s_L\left(1 - e^{- \frac{z_L}{B(A)}\Sigma_L}\right),\\
    N(t+1)&=s_N\left(1 - e^{- \frac{z_N}{L(t)}\Sigma_N}\right)L(t),\\
    A(t+1)&=s_A\left(1 - e^{- \frac{z_A}{N(t+1)}\Sigma_A}\right)N(t+1),
\end{align*}
for $L,N,A>0$ where $B(A)=\frac{rA(t)/2}{\frac{A(t)/2}{H_6(t)}+K}$.
A positive fixed point $(L_\infty,N_\infty,A_\infty)$ must satisfy
\begin{align*}
    L_\infty&=B(A_\infty)s_L\left(1 - e^{- \frac{z_L}{B(A_\infty)}\Sigma_L}\right),\\
    N_\infty&=s_N\left(1 - e^{- \frac{z_N}{L_\infty}\Sigma_N}\right)L_\infty,\\
    A_\infty&=s_A\left(1 - e^{- \frac{z_A}{N_\infty}\Sigma_A}\right)N_\infty.
\end{align*}
Define
\begin{align*}
    f_1(x) &= B(x)s_L\left(1 - e^{- \frac{z_L}{B(x)}\Sigma_L}\right)\\
    f_2(x) &= s_N\left(1 - e^{- \frac{z_N}{x}\Sigma_N}\right)x\\
    f_3(x) &= s_A\left(1 - e^{- \frac{z_A}{x}\Sigma_A}\right)x.
\end{align*}
Then existence of a positive fixed point of Model~\eqref{fullsystem} is equivalent to existence of a positive solution of $A = f_3(f_2(f_1(A)))$. Define $G(A)=f_3(f_2(f_1(A)))$.

Note that each function $f_i$ for $i\in \{1,2,3\}$ satisfies
\begin{itemize}
    \item $f_i(x)\to 0$ as $x \to 0$,
    \item $f_i'(x)>0$ for $x>0$,
    \item $f_i''(x)<0$ for $x>0$.
\end{itemize}

Thus, each $f_i$ is increasing and concave on $(0,\infty)$, and thus so is their composition $G$. In addition, $G(x) \to 0$ as $x \to 0$.

Because $G$ is increasing, concave, and zero at the origin, the equation $A=G(A)$ has at most one positive solution. 

To determine whether such a solution exists, we examine the slope at the origin. In particular, a positive fixed point exists iff $G'(0)>1$.

By the chain rule, 
\begin{align*}
    G'(0)&=f_3'(0) f_2'(0) f_1'(0)\\
    &=(s_A)
\left(s_N\right)
\left(\frac{rs_L}{2K}\right)\\
&=\frac{r s_L s_N s_A}{2K}\\
&=\frac{rS}{K},
\end{align*}
where $S=s_Ls_Ns_A/2$.

Therefore, $G'(0)>1$ if and only if $rS/K>1$. 

Thus, Model~\eqref{fullsystem} has a unique positive fixed point if and only if $rS/K>1$.
\end{proof}

\subsection{{Second Generation System - Full Model}}\label{ap:second_gen_full}
To assess the stability of a 2-cycle of Model~\eqref{fullsystem}, we consider the second-generation map iterating each equation once. This yields
\begin{align*}
    L(t+2)&=B(t+1)s_L\left(1-e^{-\frac{z_L}{B(t+1)}\Sigma_L}\right)\\
    A(t+2)&=B(t)s_Ls_Ns_A\left(1-e^{-\frac{z_L}{B(t)}\Sigma_L}\right)\left(1-e^{-\frac{z_N}{B(t)s_LS_L(t)}\Sigma_N}\right)\left(1-e^{-\frac{z_A}{s_NS_N(t+1)B(t)s_LS_L(t)}\Sigma_A}\right),
\end{align*}
where
\begin{align*}
    B(t+1) &= \frac{rs_Ns_AS_N(t)S_A(t)L(t)/2}{\frac{s_Ns_AS_N(t)S_A(t)L(t)/2}{H_{6_\infty}}+K}
\end{align*}
and
\begin{align*}
    S_N(t+1) &= \left(1-e^{-\frac{z_N}{B(t)s_LS_L(t)}}\right).
\end{align*}
The Jacobian of the second-generation system evaluated at the 2-cycle is

\begin{align*}
    J = \begin{bmatrix}
        \frac{r s_L s_N s_A S_{L_{2_\infty}} S_{N_\infty} S_{A_\infty} /2}{\frac{s_N s_A S_{N_\infty} S_{A_\infty} L_\infty/2}{H_{6_\infty}}+K} \cdot J_{LL} & 0\\
        0 & \frac{r s_L s_N s_A S_{L_\infty} S_{N_{2_\infty}} S_{A_{2_\infty}} /2}{\frac{A_{\infty}/2}{H_{6_\infty}}+K} \cdot J_{AA}
    \end{bmatrix},
\end{align*}
where
\begin{align*}
S_{L_{2_\infty}} &= \left(1-e^{-\frac{z_L}{B_{2_\infty}}\Sigma_L}\right),\\
S_{N_{2_\infty}} &= \left(1-e^{-\frac{z_N}{B_\infty s_LS_{L_\infty}}\Sigma_N}\right),\\
S_{A_{2_\infty}} &= \left(1-e^{-\frac{z_A}{s_NS_{N_{2_\infty}}B_\infty s_LS_{L_\infty}}\Sigma_A}\right),
\end{align*}
and
\begin{align*}
    J_{LL}&= \frac{K}{\frac{s_Ns_AS_{N_\infty}S_{A_\infty}L_\infty/2}{H_{6_\infty}}+K}(1+f(x_{L_2}))(1+f(x_N))(1+f(x_A)),\\
    J_{AA} &= \frac{K}{\frac{A_\infty/2}{H_{6_\infty}}+K}(1+f(x_{L}))(1+f(x_{N_2}))(1+f(x_{A_2})).
\end{align*}
Here, $f(x)$, $x_L$, $x_N$, and $x_A$ are as defined in~\ref{ap:full_qual_analysis} and
\begin{align*}
    x_{L_2}=\frac{z_L  \Sigma_L}{B_{2_\infty}}, \quad
    x_{N_2} = \frac{z_N \Sigma_N}{B_\infty s_L S_{L_\infty}}, \quad
    x_{A_2} = \frac{z_A \Sigma_A}{B_\infty s_L s_N s_A S_{L_\infty}S_{N_{2_\infty}}},
\end{align*}
with 
\begin{align*}
    B_\infty = \frac{rA_\infty/2}{\frac{A_\infty/2}{H_{6_\infty}}+K}, \quad 
    B_{2_\infty} = \frac{rs_Ns_AS_{N_\infty}S_{A_\infty}L_\infty/2}{\frac{s_Ns_AS_{N_\infty}S_{A_\infty}L_\infty/2}{H_{6_\infty}}+K}.
\end{align*}
Evaluating the Jacobian at the two ends of the 2-cycle
\begin{align*}
    (L_1,A_1)=(L_\infty,0), \quad (L_2,A_2)= (0,A_\infty),
\end{align*}
yields
\begin{align*}
    J(L_1,A_1)=\begin{bmatrix}
        J_{LL} & 0\\
        0 & \frac{rS}{K}
    \end{bmatrix}, \quad J(L_2,A_2) = \begin{bmatrix}
        \frac{rS}{K} & 0\\
        0 & J_{AA}
    \end{bmatrix},
\end{align*}
since
\begin{align*}
    \frac{r s_L s_N s_A S_{L_{2_\infty}} S_{N_\infty} S_{A_\infty} /2}{\frac{s_N s_A S_{N_\infty} S_{A_\infty} L_\infty/2}{H_{6_\infty}}+K} = \frac{L_\infty}{L_\infty}=1, \quad \text{and} \quad \frac{r s_L s_N s_A S_{L_\infty} S_{N_{2_\infty}} S_{A_{2_\infty}} /2}{\frac{A_\infty/2}{H_{6_\infty}}+K} = \frac{A_\infty}{A_{\infty}} = 1,
\end{align*}
and $S_{L_\infty}$, $S_{N_{2_\infty}}$, $S_{A_{2_\infty}}$ approach one as $A_\infty$ approaches zero while $S_{L_{2_\infty}}$, $S_{N_\infty}$, $S_{A_\infty}$ approach one as $L_\infty$ approaches zero.

The eigenvalues of these matrices are the diagonal entries, and because both $J_{LL}$ and $J_{AA}$ are positive and less than one, the condition for stability is $\frac{rS}{K}<1$. However, the 2-cycle only exists for $\frac{rS}{K}>1$. Thus, the 2-cycle is always unstable.

\section{Calculation of Derived Parameter Values}\label{ap:paramcalc}

Not all parameters in Table~\ref{paramvaluesnorthsouth} were available directly from published studies. Several were instead inferred from the model structure, baseline host communities, and modeling assumptions. The procedures used to obtain these derived parameter values are summarized below.

Baseline host densities, together with host recruitment $\Lambda_i$, were used to determine the mortality parameters through the fixed point conditions of the host equations derived in Section~\ref{modelanalysis}. Specifically, the fixed point host densities satisfy
\begin{equation*}
    \begin{aligned}
        H_{i_\infty} &= \frac{\Lambda_i}{2} \left(\frac{e^{-\mu_i}+e^{-\frac{\mu_i}{2}}}{1-e^{-\mu_i}}\right), \quad \text{for $i \in \{1,2,3,4\}$},\\
        H_{5_{\infty}}&=\frac{\Lambda_{5}e^{-\mu_5}}{1-e^{-\mu_5}},\\
        H_{6_{\infty}}&=\frac{\Lambda_{6}e^{\frac{-\mu_6}{2}}}{1-e^{-\mu_6}}.
    \end{aligned}
\end{equation*}
Solving these equations for $\mu_i$ gives
\begin{align*}
    \mu_i &= 2\ln\left(\frac{\Lambda_i}{2H_{i_\infty}}+1\right), \quad \text{for $i\in \{1,2,3,4\}$},\\
    \mu_5 &= \ln\left(\frac{\Lambda_5}{H_{5_\infty}+1}\right),\\
    \mu_6 &= 2\ln\left(\frac{\Lambda_6 +\sqrt{\Lambda_6^2+4H_{6_\infty}^2}}{2H}\right).
\end{align*}

Because each host category contains several species, literature estimates first had to be expressed in terms of a single representative host before they could be incorporated into the model. Following the framework introduced in Section~\ref{development}, we used host-burden data compiled by \cite{ginsberg2021lyme}. Northern preference estimates were based on observations from Massachusetts, New Jersey, and Wisconsin, whereas southern estimates were obtained from North Carolina, South Carolina, and Tennessee, which together provided the most complete regional dataset.

Relative host preference was estimated separately for each tick life stage by dividing the total number of ticks recovered from a host species by the total number of individuals of that species examined during the corresponding questing season, including hosts without attached ticks. These per-host burdens were then adjusted using the assumed regional questing behavior. To represent cumulative exposure across an average ten-week questing season, each coefficient was multiplied by ten. Because larvae, nymphs, and adults remain attached for different lengths of time, additional stage-specific scaling factors of 2, 1.5, and 1 were applied based on typical attachment durations of 3--5, 4--6, and 6--8 days, respectively \citep{narasimhan2024laboratory, nuss2017rearing}. This procedure yielded the baseline preferences $\hat{b}_{i,j}$.

These preference estimates were subsequently used to express every species within a host group in terms of the selected representative species. Conversion factors were computed by comparing the observed burden of each species with that of the representative host, and reported densities were multiplied by these factors so that all host abundances were expressed as a common host.

Since host preference varies across tick life stages, the representative host densities obtained above differ for larvae, nymphs, and adults. To retain a single baseline host density in the model, we introduced scaling parameters $\sigma_{i,j}$ that relate the larval-equivalent density to the corresponding nymphal and adult values. Each scaling constant was computed as the ratio of the stage-specific density to the larval density for the same host group, yielding the values reported in Table~\ref{paramvaluesnorthsouth}.

The calibration parameters $z_j$ were chosen so that the modeled probability of successfully locating a host reproduced baseline estimates of stage-to-stage survival. Their calculation combines the baseline tick densities, host abundances, preference coefficients, and scaling parameters listed in Table~\ref{paramvaluesnorthsouth}. An additional was made because the available field estimates correspond to questing ticks that have already survived off-host mortality. In contrast, the host-finding function $S_j(t)$ is applied before environmental survival in the model. Consequently, the baseline questing densities were converted to pre-survival stage abundances by dividing each observed density by its corresponding environmental survival probability. For example, the larval abundance entering the host-finding calculation is $L_{\text{base}}/s_L$ rather than the observed questing density $L_{\text{base}}$.

Published estimates indicate that approximately 10\% of larvae, 10\% of nymphs, and 30\% of adults successfully survive and transition to the next life stage \citep{daniels2000estimating, ostfeld2023mouse}. Combining these survival estimates with the environmental survival probabilities $s_j$ yields the following expressions for the calibration constants:
\begin{align*}
    z_L &= -\frac{L_{\text{base}}}{s_L}\frac{\ln(1-\frac{0.1}{s_L})}{\Sigma_{L_{\text{base}}}}\\
    z_N &= -\frac{N_{\text{base}}}{s_N}\frac{\ln(1-\frac{0.1}{s_N})}{\Sigma_{N_{\text{base}}}}\\
    z_A &= -s_NS_{N_{\text{base}}}\frac{N_{\text{base}}}{s_N}\frac{\ln(1-\frac{0.3}{s_A})}{\Sigma_{A_{\text{base}}}},
\end{align*}
where $\Sigma_{j_{\text{base}}}$ denotes the baseline host-availability term defined in Section~\ref{model}.

\section{Southern Host Densities}\label{ap:southernhosts}
\begin{table}[!h]
    \centering
    \resizebox{\textwidth}{!}{
    \begin{tabular}{c|c|c|c|c}
    \toprule
        \textbf{Species} & \makecell{\textbf{Population}\\ \textbf{Density}\\\textbf{(no./ha)}} & \makecell{\textbf{Location}} & \makecell{\textbf{Habitat Type}} & \makecell{\textbf{Time of Year}}\\
        \midrule
        \makecell{White-footed\\mouse} &\makecell{10.8--46.1 \citep{buckner1985response}\\36.5 \citep{scarlett2004acorn}\\19.5 \citep{scarlett2004acorn}\\11.7 \citep{scarlett2004acorn}\\2.72 \citep{howell1954populations}}&\makecell{North Carolina\\North Carolina\\North Carolina\\
        North Carolina\\Tennessee}& \makecell{Forest\\Forest\\Forest\\Forest\\Grassland}&\makecell{June--August\\February--March\\December\\September--October\\July--August}\\
        \hline
        \makecell{Cotton mouse} & \makecell{6.10  \citep{mccarley1954fluctuations}\\3.30 \citep{mccarley1954fluctuations}\\2.22 \citep{mccarley1954fluctuations}\\1.84 \citep{mabry2003influence}\\1.27 \citep{mabry2003influence}\\1.24 \citep{mccarley1954fluctuations}\\0.52 \citep{mabry2003influence}} & \makecell{Texas\\Texas\\Texas\\South Carolina\\South Carolina\\Texas\\South Carolina} & \makecell{Forest\\Forest\\Forest\\Forest\\Forest\\Forest\\Forest} & \makecell{January--March\\April--June\\July--September\\March--May\\June--August\\October--December\\September--December}\\
        \hline
        \makecell{Eastern\\chipmunk} & \makecell{4.3--78.6 \citep{wolff1996population}} & \makecell{Virginia} & \makecell{Forest} & \makecell{April--October}\\
        \hline
        \textit{Sorex} shrew & \makecell{30--44 \citep{french1980natural}} & \makecell{Alabama} & \makecell{Forest} & \makecell{October--June}\\
        \hline
        \makecell{Short-tailed\\shrew} &\makecell{30.6 \citep{kitchings1981habitat}\\18.4 \citep{kitchings1981habitat}}&\makecell{Tennessee\\Tennessee}& \makecell{Forest\\Forest}&\makecell{June--August\\October--November}\\
        \hline
        \makecell{Red-backed\\vole} &\makecell{50 \citep{adams1991changes}}&\makecell{North Carolina}& \makecell{Forest}&\makecell{August}\\
        \hline
        Gray squirrel & \makecell{8.1 \citep{williams2011comparison}\\6.5 \citep{williams2011comparison}\\5.2 \citep{williams2011comparison}\\4.9 \citep{williams2011comparison}\\0.59--3.3 \citep{barkalow1970vital}\\0.64--2.5 \citep{barkalow1970vital}} & \makecell{Georgia\\Georgia\\Georgia\\Georgia\\North Carolina\\North Carolina} & \makecell{Forest\\Urban park\\Residential woodlots\\Forest\\Woodland\\Woodland} & \makecell{October--March\\October--March\\October--March\\October--March\\Fall\\Spring}\\
        \hline
        Red squirrel & \makecell{0.1--1.5 \citep{stevens1999evaluation}} & \makecell{Tennessee} & \makecell{Forest} & \makecell{Annual}\\
        \bottomrule
        \end{tabular}
        }
        \caption{Host densities in units of individuals/ha for the southeastern U.S. (continued on the following page).}
        \label{southernhostdensities}
        \end{table}
        %\newpage
        \begin{table*}[ht!] \centering \resizebox{\textwidth}{!}{
    \begin{tabular}{c|c|c|c|c}
        \toprule
        \textbf{Species} & \makecell{\textbf{Population}\\ \textbf{Density}\\\textbf{(no./ha)}} & \makecell{\textbf{Location}} & \makecell{\textbf{Habitat Type}} & \makecell{\textbf{Time of Year}}\\
        \midrule
        \makecell{Five-lined\\skink} & \makecell{121--227 \citep{fitch1954life}} & \makecell{Kansas} & \makecell{Woodland} & \makecell{Annual}\\ \hline
        \makecell{Eastern\\fence lizard} & \makecell{47--62.5 \citep{parker1994demography}\\29.5--40 \citep{parker1994demography}\\7.66 \citep{tinkle1972sceloporus}\\1.65 \citep{tinkle1972sceloporus}\\\vspace{0.1cm}} & \makecell{Mississippi\\Mississippi\\South Carolina\\Texas\\\vspace{0.1cm}} & \makecell{Forest\\Forest\\Cleared area\\Rolling terrain\\with scattered rock} & \makecell{April--June\\July--September\\Annual\\Annual\\\vspace{0.1cm}}\\
        \hline
        \makecell{Ground skink} & \makecell{329.9--593.1 \citep{brooks1967population}\\504.1--583.2 \citep{brooks1967population}\\476.9--494.2 \citep{brooks1967population}\\388.0--449.7 \citep{brooks1967population}\\66.1 \citep{akin1998fourier}} & \makecell{Florida\\Florida\\Florida\\Florida\\Louisiana} & \makecell{Forest\\Forest\\Forest\\Forest\\Forest} & \makecell{July--September\\October--December\\January--March\\April--June\\July--August}\\
        \hline
        %\makecell{Southeastern\\five-lined skink} & \makecell{} & \makecell{} & \makecell{} & \makecell{}\\
        %\hline
        %\makecell{Striped skunk} & \makecell{0.05 \citep{stout1974striped}\\0.05 \citep{stout1974striped}\\0.04 \citep{stout1974striped}\\0.04 \citep{stout1974striped}\\0.002--0.007 \citep{hansen2004population}} & \makecell{Virginia\\Virginia\\Virginia\\Virginia\\Texas} & \makecell{Field/Forest\\Field/Forest\\Field/Forest\\Field/Forest\\Cropland} & \makecell{January--March\\October--December\\April--June\\July--September\\May--June}\\
        %\hline
        \makecell{White-tailed\\deer} & \makecell{0.55 \citep{wathen1989white}\\0.37--0.48 \citep{wathen1989white}\\0.37--0.40 \citep{wathen1989white}\\0.20 \citep{wathen1989white}} & \makecell{Tennessee\\Tennessee\\Tennessee\\Tennessee} & \makecell{Field/Woodlot\\Field/Woodlot\\Field/Woodlot\\Field/Woodlot} & \makecell{March--May\\September--November\\June--August\\December--February}\\
        \hline
        \makecell{Virginia\\opossum} & \makecell{0.11--0.29 \citep{wolcott2011population}\\0.10 \citep{conner1983scent}\\0.09 \citep{stout1974ecology}\\0.07 \citep{stout1974ecology}\\0.06 \citep{stout1974ecology}\\0.04 \citep{stout1974ecology}\\0.03 \citep{bernasconi2022influence}\\0.03 \citep{bernasconi2022influence}\\0.01 \citep{bernasconi2022influence}\\0.01 \citep{bernasconi2022influence}} & \makecell{Tennessee\\Florida\\Virginia\\Virginia\\Virginia\\Virginia\\South Carolina\\South Carolina\\South Carolina\\South Carolina} & \makecell{Forest\\Woodland\\Field/Woodland\\Field/Woodland\\Field/Woodland\\Field/Woodland\\Swamp\\Wetland\\Forest\\Forest} & \makecell{January--March\\December--March\\July--September\\April--June\\October-December\\January--March\\January--May\\January--May\\January--May\\January--May}\\
        \hline
        \makecell{Raccoon} & \makecell{0.08--1.05\\ \citep{sonenshine1972contrasts}\\0.12 \citep{johnson1969biology}\\0.05--0.12 \citep{keeler1978some}\\0.1--0.11 \citep{conner1983scent}\\0.07 \citep{nottingham1985raccoon}\\0.05 \citep{rabinowitz1981ecology}\\0.05 \citep{nottingham1985raccoon}\\0.01--0.04 \citep{keeler1978some}} & \makecell{Virginia\\\\Alabama\\Tennessee\\Florida\\Tennessee\\Tennessee\\Tennessee\\Tennessee} & \makecell{Woodland\\\\Woodland\\Forest\\Woodland\\Woodland\\Woodland\\Woodland\\Forest} & \makecell{Annual\\\\November--May\\February--March\\January--March\\Fall\\Summer\\Yearly\\January--March}\\
         \bottomrule
    \end{tabular}}
    \caption*{Table~\ref{southernhostdensities} continued.}
    %\label{ap:southernparamtable}
\end{table*}
\section*{Declaration}

\noindent \textbf{Competing Interests}\\
The authors declare that they have no known competing financial interests or personal relationships that could have appeared to influence the work reported in this paper.

\noindent \textbf{Funding}\\
This work was supported by the U.S. National Science Foundation under Grant No. DMS-2230790.

%-----Bibliography----------------------
\bibliographystyle{elsarticle-num}
\bibliography{refs.bib}

\end{document}